\documentclass{article}
\usepackage{arxiv}

\usepackage[utf8]{inputenc}
\usepackage[T1]{fontenc}
\usepackage{hyperref}
\usepackage{url}
\usepackage{amsmath,amssymb,amsthm,amsfonts}
\usepackage{nicefrac}
\usepackage{microtype}
\usepackage{graphicx}
\usepackage{mathtools,bm,mathrsfs}
\usepackage{enumitem}
\usepackage{booktabs}
\usepackage{array}

\newtheorem{theorem}{Theorem}[section]
\newtheorem{lemma}[theorem]{Lemma}
\newtheorem{proposition}[theorem]{Proposition}

\theoremstyle{remark}

\newcommand{\T}{\mathcal T}
\newcommand{\SigmaN}{\Sigma_N}
\newcommand{\Csafe}{\mathcal C}
\newcommand{\Eaug}{\mathscr E}
\newcommand{\Daug}{\mathscr D}
\newcommand{\Flam}{\mathfrak F}
\newcommand{\esssup}{\operatorname*{ess\,sup}}
\newcommand{\diag}{\operatorname{diag}}

\newcommand{\divv}{\operatorname{div}}
\newcommand{\R}{\mathbb R}
\newcommand{\fint}{\mathop{\ooalign{$\int$\cr\kern0.32em--\cr}}}

\newcommand{\subjclass}[2][2020]{%
  \par\addvspace\medskipamount
  {\small\textit{2020 Mathematics Subject Classification.} #2\par}}

\hypersetup{
  pdftitle  = {Quantitative vanishing-regularization limits for degenerate cross-diffusion with changing boundary strata},
  pdfauthor = {Shchetinin, E.Yu. and Shevchuk, A.A. and Salpagarov, S.I.},
  pdfsubject = {math.AP},
  pdfkeywords = {cross-diffusion, degenerate parabolic systems, relative entropy, singular limit, intrinsic De Giorgi iteration, weak-strong stability}
}

\title{Quantitative vanishing-regularization limits for\\
       degenerate cross-diffusion with changing boundary strata}

\author{%
  E.\,Yu.~Shchetinin \\
  Sevastopol State University \\
  Sevastopol, Russia \\
  \texttt{riviera-molto@mail.ru}
  \And
  A.\,A.~Shevchuk \\
  Sevastopol State University \\
  Sevastopol, Russia \\
  \texttt{andreiluck11@yandex.ru}
  \AND
  S.\,I.~Salpagarov \\
  RUDN University \\
  Moscow, Russia \\
  \texttt{salpagarov-si@rudn.ru}
}

\begin{document}
\maketitle

\begin{abstract}
We study a vanishing-regularization limit for a degenerate cross-diffusion system on the simplex when the active boundary stratum of a smooth reference solution may change in space and time. The mobility
\[
M(u)=\diag u-u\otimes u
\]
loses rank when a component vanishes, so a fixed relative entropy or an unweighted two-point gradient estimate cannot remain uniformly coercive across moving faces. We introduce a geometric-relative framework that separates the single emerging normal direction from the active tangential subsystem. Its ingredients are a global resolvent geodesic distance, a boundary-oriented rank-one perspective correction, an intrinsic Schur-normal De Giorgi alternative, a mean-free tangential iteration with temporal-flux control, and a terminal coupled Schur compactness argument. These mechanisms yield a non-circular first-exit theorem for smooth approximations and then pass to admissibly approximable entropy weak solutions. If the strong solution stays a fixed distance from the codimension-two skeleton and its vanishing components satisfy
\[
|\nabla v_i|\lesssim v_i^{\alpha},\qquad
|\{0<v_i<r\}|\lesssim r^{\sigma},
\]
then the augmented relative energy is bounded by
\[
\rho_\lambda=\begin{cases}
\lambda^{2\alpha+\sigma},&2\alpha+\sigma<2,\\
\lambda^2|\log\lambda|,&2\alpha+\sigma=2,\\
\lambda^2,&2\alpha+\sigma>2.
\end{cases}
\]
For a hypersurface contact $v_i\asymp d^p$, $p\ge2$, this gives
$\rho_\lambda=O(\lambda^{2-1/p})$ and
$\|u^\lambda-v\|_{L^\infty(0,T;L^2)}=O(\lambda^{1-1/(2p)})$.
\end{abstract}

\keywords{cross-diffusion \and degenerate parabolic systems \and relative entropy \and
  singular limit \and intrinsic De Giorgi iteration \and weak--strong stability}

% MSC 2020: 35K51, 35K55, 35B40, 35A02
\subjclass[2020]{35K51, 35K55, 35B40, 35A02}

\section{Introduction}

Cross-diffusion systems couple the flux of each component to gradients of several components. Their diffusion matrices are frequently nonsymmetric or degenerate, and standard uniformly parabolic arguments are therefore unavailable. Entropy variables and boundedness-by-entropy methods provide a robust existence framework for many such systems; see, among others, \cite{Jungel2015,Jungel2016}. Relative entropy has also become a central tool for weak--strong uniqueness and stability in cross-diffusion, including reaction--cross-diffusion systems \cite{ChenJungel2019}, Maxwell--Stefan systems \cite{HuoJungelTzavaras2022}, and nonlocal or volume-filling models \cite{JungelPortischZurek2022,Liu2025,HeitzingerJungel2025}. Partial regularity based on entropy-adapted Campanato methods was developed in \cite{BraukhoffRaithelZamponi2022}; a recent glued-entropy approach substantially weakens positivity assumptions in weak--strong uniqueness \cite{GeltnerHeitzingerJungel2026}.

The present paper addresses a different difficulty: the \emph{active boundary stratum of the reference solution is allowed to move}. We consider the simplex
\[
\SigmaN=\{u\in[0,1]^N:\mathbf 1\cdot u=1\},\qquad
\T=\{\xi\in\R^N:\mathbf 1\cdot\xi=0\},
\]
and the covariance mobility
\[
M(u)=\diag u-u\otimes u.
\]
If one component vanishes, $M(u)$ acquires an additional kernel direction. Hence the geometry itself changes with the state. A relative functional that is uniformly coercive on one fixed face need not remain uniformly coercive while the strong solution moves from the interior to that face or back.

The model regularization is
\begin{align}
\partial_tu^\lambda-\divv\big((M(u^\lambda)+\lambda I_{\T})\nabla u^\lambda\big)&=0,
\label{eq:reg}\\
\partial_tv-\divv\big(M(v)\nabla v\big)&=0,
\label{eq:limit}
\end{align}
with no-flux boundary conditions and common initial data. The parameter $\lambda>0$ removes the rank defect, but the coercivity degenerates as $\lambda\downarrow0$ precisely in the normal direction to a face. Our objective is a quantitative limit that remains valid when the vanishing component of $v$ depends on $(t,x)$.

The analysis is not a direct consequence of a standard weak--strong estimate. Several natural stronger claims are false. The ordinary affine Hessian of the squared geodesic distance is not uniformly controlled in the boundary layer; the unweighted normal gradient dissipation can vanish like a power of $\lambda$; global matrix-weighted two-point coercivity fails for safe pairs with a large tangential mismatch; and a critical Morrey bound for the Schur cross flux does not by itself imply pointwise tangential smallness. These obstructions dictate an anisotropic construction.

\subsection*{Main ideas}
The proof separates the local geometry into one normal coordinate and an active tangential block.

First, we use the resolvent metric
\[
g_{\lambda,z}(\xi,\zeta)=\xi^\top(M(z)+\lambda I_{\T})^{-1}\zeta.
\]
The corresponding kinetic action is jointly convex in state and velocity. This gives a unique minimizing geodesic, and a Fisher--Rao comparison keeps geodesics between safe endpoints away from the codimension-two skeleton. The squared distance is therefore globally well defined on the safe region.

Second, the geodesic functional is augmented only in the emerging normal species. The profile
\[
\Phi''(R)=\begin{cases}R^{-1},&R\le1,\\R^{-2},&R\ge1,\end{cases}
\]
produces a boundary-oriented perspective divergence. It removes both separated normal-ratio singularities while leaving the active tangential block uniformly coercive. The only pure strong-state remainder is the regularizing flux $\lambda\nabla v_i$.

Third, in constant affine Schur coordinates the local diffusion matrix has the form
\[
\mathcal A_\lambda(r,z)=
\begin{pmatrix}
 a_\lambda & b^\top\\ b&C_\lambda
\end{pmatrix},
\qquad
a_\lambda\asymp\lambda+r,\quad |b|\lesssim r,\quad C_\lambda\succeq cI.
\]
The normal equation admits a $\lambda/k$-uniform intrinsic De Giorgi alternative. The tangential subsystem is handled by a mean-free Campanato iteration: one subtracts the spatial mean at each time, while the evolution of that mean is controlled separately by a local flux identity. This removes the temporal constant mode that survives when the intrinsic mobility scale tends to zero.

Finally, at the terminal intrinsic scale the full Schur system is retained. The normal-to-tangential feedback carries the small coefficient $k^2/(\lambda+k)$. In the singular limit the mean-free tangential profile becomes spatially harmonic, whereas the normal profile solves a scalar uniformly parabolic equation with a small harmonic-gradient drift. The drift is divergence free, which yields an exact truncation cancellation and a uniform De Giorgi oscillation decay. The non-singular terminal regime is closed by the same triangular compactness scheme.

These ingredients lead to a smooth simultaneous first-exit theorem. The passage to entropy weak solutions is then a compactness/lower-semicontinuity step rather than a new regularity argument.

\subsection*{Contribution relative to existing entropy methods}
The purpose of the paper is not to replace the general existence theory for cross-diffusion. Instead, it isolates a quantitative mechanism for a rank-changing singular limit. Glued entropies such as those in \cite{BraukhoffRaithelZamponi2022,GeltnerHeitzingerJungel2026} are designed to regularize degeneracies in state-space entropy variables. Here the difficulty is more geometric: one normal mode emerges and disappears with the active face, while the active tangential subsystem remains uniformly parabolic. The resulting estimate is therefore intentionally anisotropic. In particular, we do \emph{not} claim a global unweighted estimate for $\nabla(u^\lambda-v)$.

The paper is organized as follows. Section~\ref{sec:setting} states the hypotheses and the main theorem. Section~\ref{sec:geometry} records the geodesic and perspective functionals. Section~\ref{sec:atlas} fixes the synchronized face/interior atlas. Sections~\ref{sec:affine} and \ref{sec:normal} prove the augmented affine estimate and the normal intrinsic alternative. Sections~\ref{sec:meanfree}--\ref{sec:terminal} establish the tangential mean-free iteration and terminal coupled regularity. The global smooth continuation and cutoff gluing are proved in Section~\ref{sec:global}. Section~\ref{sec:weak} gives the weak passage and Section~\ref{sec:rates} the contact-dependent convergence rates.

\section{Setting and main results}\label{sec:setting}

Let $N\ge3$, let $\Omega\subset\R^d$ be a bounded $C^{1,1}$ domain, and set $Q_T=(0,T)\times\Omega$. We order components increasingly,
$z_{[1]}\le\cdots\le z_{[N]}$, and define
\[
\Csafe_a=\{z\in\SigmaN:z_{[2]}\ge a\}.
\]
Thus no more than one component can be smaller than $a$ on $\Csafe_a$.

We impose the following hypotheses on the reference solution $v$.

\begin{enumerate}[label=(H\arabic*)]
\item \label{Hsmooth} $v\in C^2([0,T]\times\overline\Omega;\SigmaN)$ solves \eqref{eq:limit} with no-flux boundary condition.
\item \label{Hsafe} There exists $\eta>0$ such that
$v([0,T]\times\overline\Omega)\subset\Csafe_\eta$.
\item \label{Hcontact} For every $i$ there are $K,r_c>0$, $\alpha\ge0$, $\sigma>0$ such that
\[
|\nabla v_i|\le K v_i^\alpha\quad\text{on }\{0<v_i<r_c\},
\]
\[
|\{(t,x)\in Q_T:0<v_i(t,x)<r\}|\le Kr^\sigma,
\qquad0<r<r_c.
\]
\item \label{Hinitial} $u^\lambda(0)=v(0)$.
\end{enumerate}

For fixed $\lambda>0$, an entropy weak solution is called \emph{admissibly approximable} if it is the strong $L^2(Q_T)$ and a.e. limit of smooth simplex-valued no-flux approximations with the common initial trace, while the gradients converge weakly in $L^2(Q_T)$ and the standard entropy bounds are uniform along the approximation.

The forcing scale is
\begin{equation}
\Flam_\lambda[v;T]
:=\lambda^2\sum_{i=1}^N\int_{Q_T}
\frac{|\nabla v_i|^2}{(v_i+\lambda)^2}\,dxdt.
\label{eq:forcing}
\end{equation}

\begin{theorem}[Quantitative weak changing-strata limit]\label{thm:main}
Assume \ref{Hsmooth}--\ref{Hinitial}. Let $u^\lambda$ be an admissibly approximable entropy weak solution of \eqref{eq:reg}. Then there exist $\lambda_0>0$ and constants $c,C>0$, independent of $0<\lambda\le\lambda_0$, such that
\begin{equation}
 u^\lambda(t,x)\in\Csafe_{\eta/4}
 \quad\text{for a.e. }(t,x)\in Q_T,
 \label{eq:safe-main}
\end{equation}
and
\begin{equation}
\esssup_{0\le t\le T}\Eaug_\lambda(u^\lambda(t)\mid v(t))
+c\int_0^T\Daug_\lambda(t)\,dt
\le C\Flam_\lambda[v;T].
\label{eq:mainenergy}
\end{equation}
Moreover
\begin{equation}
\esssup_{0\le t\le T}
\|u^\lambda(t)-v(t)\|_{L^2(\Omega)}^2
\le C\Flam_\lambda[v;T].
\label{eq:l2main}
\end{equation}
The dissipation $\Daug_\lambda$ is anisotropic: on a face chart it controls the active tangential gradient error uniformly and the unique normal gradient error with a state-dependent weight. No global unweighted estimate for the full gradient error is asserted.
\end{theorem}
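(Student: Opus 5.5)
The plan follows the paper's announced architecture: prove a smooth first-exit theorem, then pass to the weak solution by lower semicontinuity. Fix a smooth simplex-valued no-flux approximation $u^{\lambda,n}\to u^\lambda$ from the definition of admissible approximability, with common initial trace $v(0)$. For such smooth solutions, define the first-exit time
\[
T_n^*=\sup\{t\le T: u^{\lambda,n}(s)\in\Csafe_{\eta/2}\ \text{on }[0,t]\times\overline\Omega\}.
\]
Then $T_n^*>0$ by continuity and \ref{Hsafe}, since $u^{\lambda,n}(0)=v(0)\in\Csafe_\eta$.

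On $[0,T_n^*]$ both states lie in the safe region. There the resolvent geodesic distance of Section~\ref{sec:geometry} is globally well defined, and the augmented functional $\Eaug_\lambda$ (geodesic part plus the rank-one perspective correction in the emerging normal species) can be differentiated in time. Using the synchronized atlas of Section~\ref{sec:atlas}, I would localize with a finite partition into face and interior charts. In each chart the augmented affine estimate of Section~\ref{sec:affine} gives
\[
\frac{d}{dt}\Eaug_\lambda+c\,\Daug_\lambda\le C\lambda^2\sum_i\int\frac{|\nabla v_i|^2}{(v_i+\lambda)^2}+(\text{localization errors}),
\]
because the only pure strong-state remainder is the flux $\lambda\nabla v_i$, paired against the perspective weight $(v_i+\lambda)^{-1}$.

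The cutoff-gluing errors of Section~\ref{sec:global} involve the tangential gradient error, the weighted normal gradient error, and $\Eaug_\lambda$ itself. They are absorbed into $c\,\Daug_\lambda$ and handled by Gronwall. This yields \eqref{eq:mainenergy} on $[0,T_n^*]$ with constants independent of $n$ and $\lambda$. The $L^2$ bound \eqref{eq:l2main} then follows from coercivity of $\Eaug_\lambda$ on $\Csafe_{\eta/4}$: the geodesic distance dominates the Euclidean one, since $(M+\lambda I_\T)^{-1}\succeq cI$ on $\T$ with $M$ bounded.

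The main obstacle is closing the bootstrap, that is, showing $T_n^*=T$. An integral energy bound of size $\Flam_\lambda[v;T]\to0$ does not by itself give pointwise control. So I would invoke the $\lambda/k$-uniform normal intrinsic De Giorgi alternative of Section~\ref{sec:normal}, the mean-free tangential Campanato iteration with temporal-flux control of Section~\ref{sec:meanfree}, and the terminal coupled Schur compactness of Section~\ref{sec:terminal}. Together they convert the smallness of $\Eaug_\lambda+\int\Daug_\lambda$ into an $L^\infty$ bound $|u^{\lambda,n}-v|\le\eta/8$ on $[0,T_n^*]$ for $\lambda\le\lambda_0$.

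The key point is non-circularity: these regularity results use only membership in $\Csafe_{\eta/2}$ (so that the Schur structure $a_\lambda\asymp\lambda+r$, $|b|\lesssim r$, $C_\lambda\succeq cI$ holds), never the conclusion. With the $L^\infty$ bound, $u^{\lambda,n}$ stays in the strictly smaller set $\Csafe_{3\eta/4}$ up to $T_n^*$. By continuity of the smooth approximation, $T_n^*<T$ would contradict maximality, so $T_n^*=T$. I expect this step, in particular uniformity of the oscillation decay as the intrinsic scale degenerates, to be the delicate one. There I would rely on the divergence-free harmonic-gradient drift cancellation stated in the introduction.

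Finally I pass $n\to\infty$. Strong $L^2$ and a.e. convergence preserve the closed condition $u^\lambda\in\Csafe_{\eta/4}$ a.e. (in fact $\Csafe_{3\eta/4}$), which gives \eqref{eq:safe-main}. Next, $\Eaug_\lambda(\cdot\mid v)$ is a continuous, bounded integrand of the state on the safe set, so it converges pointwise a.e. in $t$ after extracting a subsequence. Then $\Daug_\lambda$ is a convex quadratic form in the gradient error with coefficients depending continuously on $(u,v)$. Weak $L^2$ convergence of gradients together with a.e. convergence of coefficients gives weak lower semicontinuity of $\int\Daug_\lambda$. The right-hand side depends only on $v$ and $\lambda$, so \eqref{eq:mainenergy} and \eqref{eq:l2main} pass to the limit.
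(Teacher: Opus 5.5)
Your architecture matches the paper's: a smooth first-exit argument, then the affine estimate with gluing and Gronwall, then closedness plus Ioffe lower semicontinuity. The gap is that you run the bootstrap on the wrong set, and this breaks the non-circularity you claim.

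\textbf{Where it fails.} You take $T_n^*$ to be the first exit from $\Csafe_{\eta/2}$ and then assert that on $[0,T_n^*]$ the augmented affine estimate holds in each chart. Proposition~\ref{prop:affine} is not available on the safe class. Its hypotheses are $u_i,v_i\le r_A$ and $|\pi_i(u)-\pi_i(v)|\le\delta_A$, and the introduction stresses that matrix-weighted two-point coercivity fails for safe pairs with a large tangential mismatch. Membership in $\Csafe_{\eta/2}$ gives neither the normal barrier $u_i<r_A$ (with $r_A\ll\eta$, while $v_i$ may be near $0$) nor tangential closeness. Your statement that the regularity results use only safe-class membership is also incorrect. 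The Schur structure does come from it, but the other hypotheses do not:
\begin{itemize}
\item Lemma~\ref{lem:DG} needs $r\le k\le r_A$ and $\mathfrak R_{R,k}\le\varepsilon_0$.
\item Lemma~\ref{lem:handoff}(i) and Theorem~\ref{thm:terminal} are stated only inside the pre-exit tangential tube.
\item The smallness of $\mathfrak R_{R,k}$ and of the mean-free excess comes from the dissipation of Proposition~\ref{prop:affine}, so again only inside the tube.
\item The cutoff-gluing terms are absorbable only because Lemma~\ref{lem:handoff}(iii) gives $u_i\ge c_Ir_A$ before $\chi_i$ switches off. That bound is itself an output of the normal alternative inside the bootstrap.
\end{itemize}

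\textbf{The repair.} Bootstrap on the synchronized tube: $\{u_i<r_A,\ |\pi_i(u)-\pi_i(v)|<\delta_\tau\}$ on each face collar $U_i$, together with interior closeness on $U_0$, with thresholds ordered as in \eqref{eq:hierarchy}. Up to the first exit from this tube, \eqref{eq:globalsmooth} and Gronwall apply, and $C\Flam_\lambda[v;T]\le\rho_*$ for small $\lambda$. Theorem~\ref{thm:localcontinue} then rules out the exit. Safe-class membership is a consequence of the tube, not the bootstrap hypothesis: $u_j=(1-u_i)\pi_i(u)_j\ge(1-r_A)(\eta-\delta_\tau)\ge\eta/2$ for $j\ne i$.

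An $L^\infty$ bound $|u-v|\le\eta/8$ is too coarse to replace the tube. It controls neither $u_i$ against $r_A$ nor the tangential error against $\delta_A$.

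Your remaining steps match the paper. These are the $L^2$ bound via Lemma~\ref{lem:statecoerc}, which holds on all of $\SigmaN\times\SigmaN$, and the weak passage.
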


\begin{theorem}[Contact-dependent rate]\label{thm:rate}
Under \ref{Hcontact},
\[
\Flam_\lambda[v;T]\le C\rho_\lambda,
\qquad
\rho_\lambda=
\begin{cases}
\lambda^{2\alpha+\sigma},&2\alpha+\sigma<2,\\
\lambda^2|\log\lambda|,&2\alpha+\sigma=2,\\
\lambda^2,&2\alpha+\sigma>2.
\end{cases}
\]
If locally near a smooth moving zero interface
$v_i\asymp d^p$ and $|\nabla v_i|\asymp d^{p-1}$ with $p\ge2$, then
\[
\rho_\lambda=O(\lambda^{2-1/p}),
\qquad
\|u^\lambda-v\|_{L^\infty(0,T;L^2(\Omega))}
=O\bigl(\lambda^{1-1/(2p)}\bigr).
\]
\end{theorem}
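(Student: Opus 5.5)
The plan is to bound $\Flam_\lambda[v;T]$ componentwise by splitting $Q_T$ according to the size of $v_i$. Fix $i$. On the set $\{v_i\ge r_c\}$ (together with $\{v_i=0\}$, where $\nabla v_i=0$ a.e.\ by Stampacchia's theorem since $v_i\in C^2$ and $v_i\ge0$), the integrand $\lambda^2|\nabla v_i|^2/(v_i+\lambda)^2$ is bounded by $\lambda^2\|\nabla v\|_\infty^2 r_c^{-2}$. These regions therefore contribute $O(\lambda^2)$. The whole difficulty lies in the layer $\{0<v_i<r_c\}$.

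On this layer I use \ref{Hcontact} in the form $|\nabla v_i|^2\le K^2v_i^{2\alpha}$. This reduces the problem to estimating
\[
\lambda^2\int_{\{0<v_i<r_c\}}\frac{v_i^{2\alpha}}{(v_i+\lambda)^2}\,dxdt .
\]
I write this through the distribution function $\mu(r)=|\{0<v_i<r\}|\le Kr^\sigma$. The layer-cake (Stieltjes) formula gives $\int f(v_i)=\int_0^{r_c}f(r)\,d\mu(r)$ for $f(r)=r^{2\alpha}(r+\lambda)^{-2}$. I then integrate by parts, which gives $f(r_c)\mu(r_c)-\int_0^{r_c}f'(r)\mu(r)\,dr$. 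Since $|f'(r)|\lesssim r^{2\alpha-1}(r+\lambda)^{-2}$ (valid when $\alpha>0$ or when $\alpha=0$ with sign control; in the case $\alpha=0$, $f'\le0$ and the correction term is harmless), everything is bounded by
\[
C+C\int_0^{r_c} r^{2\alpha+\sigma-1}(r+\lambda)^{-2}\,dr .
\]
The boundary term at $0$ vanishes because $\mu(r)\to0$.

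Next I split the last integral at $r=\lambda$. On $(0,\lambda)$ I bound $(r+\lambda)^{-2}\le\lambda^{-2}$ and get $\lambda^{-2}\lambda^{2\alpha+\sigma}$, using $\sigma>0$. On $(\lambda,r_c)$ I bound $(r+\lambda)^{-2}\le r^{-2}$ and get $\int_\lambda^{r_c}r^{2\alpha+\sigma-3}\,dr$. This is $\lesssim\lambda^{2\alpha+\sigma-2}$, $|\log\lambda|$, or $O(1)$ according as the exponent $\gamma=2\alpha+\sigma$ satisfies $\gamma<2$, $\gamma=2$, or $\gamma>2$. Multiplying by $\lambda^2$ and summing over $i$ yields exactly $C\rho_\lambda$, with the $O(\lambda^2)$ bulk term absorbed in every case since $\rho_\lambda\gtrsim\lambda^2$. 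This proves the first assertion.

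For the interface case I verify \ref{Hcontact} with explicit exponents. From $v_i\asymp d^p$ and $|\nabla v_i|\asymp d^{p-1}$ I get $|\nabla v_i|\lesssim v_i^{(p-1)/p}$, so $\alpha=1-1/p$. Because the interface is smooth, tubular coordinates (uniform in $t$ by compactness) give $|\{0<v_i<r\}|\lesssim|\{0<d\lesssim r^{1/p}\}|\lesssim r^{1/p}$, so $\sigma=1/p$. Then $2\alpha+\sigma=2-1/p<2$, and $\rho_\lambda=\lambda^{2-1/p}$. Finally, \eqref{eq:l2main} from Theorem~\ref{thm:main} gives $\|u^\lambda-v\|_{L^\infty L^2}^2\le C\lambda^{2-1/p}$, and taking square roots gives the rate $\lambda^{1-1/(2p)}$.

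I expect the main obstacle to be the Stieltjes integration by parts. It must be done carefully because $\mu$ is only monotone and we have only an upper bound on it; this is why I use $-f'\mu$ with a sign-controlled derivative rather than $d\mu$ directly. If that becomes awkward, the alternative is a dyadic decomposition of $\{0<v_i<r_c\}$ into shells $\{2^{-k-1}r_c\le v_i<2^{-k}r_c\}$. On each shell the integrand is bounded by $\lambda^2 2^{-2\alpha k}(2^{-k}+\lambda)^{-2}$, up to constants, and the shell measure is at most $K2^{-k\sigma}$. Summing this geometric series with the same split at $2^{-k}\approx\lambda$ reproduces the three cases.
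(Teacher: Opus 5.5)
Your proposal is correct and essentially the paper's own argument. The paper splits into $\{v_i\le\lambda\}$ and dyadic shells $\{2^j\lambda<v_i\le2^{j+1}\lambda\}$, bounds the integrand via $|\nabla v_i|\le Kv_i^\alpha$ and the shell measure via the sublevel bound, and sums the geometric series; this is exactly your fallback and the discrete form of your layer-cake computation, and the interface exponents $\alpha=1-1/p$, $\sigma=1/p$ and the appeal to \eqref{eq:l2main} also match. One thing to tidy: the sign remark for $\alpha=0$ is unnecessary, since $|f'(r)|\le(2\alpha+2)r^{2\alpha-1}(r+\lambda)^{-2}$ holds for every $\alpha\ge0$ and $-f'\mu\le|f'|\,Kr^\sigma$ bounds the correction term regardless of sign.
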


\section{Geodesic functional and boundary-oriented perspective correction}\label{sec:geometry}

For $0<\lambda\le1$ define
\[
g_{\lambda,z}(\xi,\zeta)=\xi^\top(M(z)+\lambda I_{\T})^{-1}\zeta,
\qquad \xi,\zeta\in\T.
\]
Let $d_{g_\lambda}$ be the associated distance and
\[
\mathcal G_\lambda(u,v)=\frac12d_{g_\lambda}(u,v)^2.
\]
The action Lagrangian
\[
L_\lambda(z,\xi)=\frac12\xi^\top(M(z)+\lambda I_{\T})^{-1}\xi
\]
is jointly convex. Writing $A_\lambda=M+\lambda I_{\T}$ and $y=A_\lambda^{-1}\xi$, one obtains
\begin{equation}
D^2L_\lambda[(h,k)]^2
=|k-DA_\lambda[h]y|_{A_\lambda^{-1}}^2+(h\cdot y)^2\ge0.
\label{eq:jointconvex}
\end{equation}
For variations with fixed endpoints the equality case reduces to a linear ODE with zero initial data, which gives uniqueness of the minimizing action. A shifted Fisher comparison prevents minimizing curves between endpoints in fixed safe classes from approaching the codimension-two skeleton. We use this only to guarantee a globally unambiguous two-point state functional; the local PDE coercivity below is not deduced from a global affine Hessian bound.

Since $M(z)\preceq I_{\T}$ on the simplex,
$M(z)+\lambda I_{\T}\preceq2I_{\T}$. Hence
\begin{lemma}[State coercivity]\label{lem:statecoerc}
For all $u,v\in\SigmaN$ and $0<\lambda\le1$,
\[
\mathcal G_\lambda(u,v)\ge\frac14|u-v|^2.
\]
\end{lemma}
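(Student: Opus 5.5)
The plan is to compare the resolvent metric with a multiple of the Euclidean metric on $\T$ and then integrate along curves. The matrix bound is already recorded just before the statement. For $z\in\SigmaN$, $M(z)$ is symmetric positive semidefinite on $\T$, with $\xi^\top M(z)\xi=\sum_i z_i\xi_i^2-(z\cdot\xi)^2\le\sum_i z_i\xi_i^2\le|\xi|^2$ because $z_i\le1$. Hence $0\prec A_\lambda(z)=M(z)+\lambda I_{\T}\preceq(1+\lambda)I_{\T}\preceq 2I_{\T}$ on $\T$ for $0<\lambda\le1$. Inverting this order relation between positive definite operators on $\T$ gives
\[
g_{\lambda,z}(\xi,\xi)=\xi^\top A_\lambda(z)^{-1}\xi\ge\tfrac12|\xi|^2,\qquad \xi\in\T .
\]

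Next I will pass from the metric to the distance. Take any admissible (say piecewise $C^1$ or absolutely continuous) curve $\gamma:[0,1]\to\SigmaN$ with $\gamma(0)=u$ and $\gamma(1)=v$. Its velocity $\dot\gamma$ lies in $\T$, since $\mathbf 1\cdot\gamma\equiv1$. Its $g_\lambda$-length therefore satisfies
\[
\int_0^1\sqrt{g_{\lambda,\gamma}(\dot\gamma,\dot\gamma)}\,ds\ge\frac1{\sqrt2}\int_0^1|\dot\gamma|\,ds\ge\frac1{\sqrt2}|u-v|,
\]
where the last step is the triangle inequality for the Euclidean length. Taking the infimum over curves gives $d_{g_\lambda}(u,v)\ge|u-v|/\sqrt2$. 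Squaring and halving then yields $\mathcal G_\lambda(u,v)\ge\frac14|u-v|^2$.

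The only point requiring care is the definition of $d_{g_\lambda}$ at boundary points of the simplex. I will take it as the infimum of lengths (equivalently, via $2L_\lambda$, of the action) over curves staying in $\SigmaN$. The pointwise bound holds for every $z\in\SigmaN$, including faces, because $\lambda>0$ keeps $A_\lambda$ invertible there, so no interior restriction is needed.

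If the paper instead defines $\mathcal G_\lambda$ through the minimal action $\inf\int_0^1 2L_\lambda(\gamma,\dot\gamma)$, the same pointwise bound combined with Cauchy--Schwarz, $\int|\dot\gamma|^2\ge(\int|\dot\gamma|)^2$, gives the identical constant. I do not expect a genuine obstacle in this argument.
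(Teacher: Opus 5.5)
Your proof is correct and follows the paper's own argument. The paper likewise uses $M(z)+\lambda I_{\T}\preceq 2I_{\T}$ on the simplex to get $g_{\lambda,z}(\xi,\xi)\ge\frac12|\xi|^2$, hence $d_{g_\lambda}(u,v)\ge|u-v|/\sqrt2$; you simply write out the variance bound, the inversion on $\T$, and the curve-length step that the paper compresses into one line.
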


The normal correction uses
\begin{equation}
\Phi(R)=
\begin{cases}
R\log R-R+1,&0<R\le1,\\
R-1-\log R,&R\ge1,
\end{cases}
\qquad
\Phi''(R)=
\begin{cases}
R^{-1},&R\le1,\\
R^{-2},&R\ge1.
\end{cases}
\label{eq:Phi}
\end{equation}
Thus $(v_i+\lambda)\Phi((u_i+\lambda)/(v_i+\lambda))$ is the shifted Kullback--Leibler divergence oriented toward the smaller normal scale. Its Hessian in the two endpoint variables has rank one.

The exact perspective identity is
\begin{align}
\frac d{dt}\int V\Phi(U/V)
&=-\int\Phi''(R)\nabla R\cdot(J_i^\lambda-RJ_i^0),\label{eq:perspective}\\
U&=u_i+\lambda,\qquad V=v_i+\lambda,\qquad R=U/V.\notag
\end{align}
The identity uses the full $i$-th row flux and is the reason no derivative of a state-space cutoff enters the singular normal algebra.

\section{Synchronized face/interior atlas}\label{sec:atlas}

Fix a face $i$ and write
\[
\pi_i(u)=\frac{u_{-i}}{1-u_i}.
\]
Let $r_A>0$ and $\delta_A>0$ be the radius and tangential width for the local affine estimate below. Choose constants in the order
\begin{equation}
0<s_b\ll k_b\ll r_A\ll\delta_\tau\le\frac14\delta_A,
\qquad r_A\ll\eta.
\label{eq:hierarchy}
\end{equation}
More explicitly, after fixing the universal normal De Giorgi constant $\delta\in(0,1)$ and $\kappa=1-\delta$, we choose
\[
k_b=c_kr_A,
\qquad
s_b\le c_sk_b,
\qquad
c_s<\frac{\delta\kappa}{16}.
\]
We also require $8s_b<k_b<r_A/8<\eta/64$.

Choose cutoffs $\chi_i\in C^2([0,1];[0,1])$ with
\[
\chi_i(s)=1\quad(s\le r_A/2),
\qquad
\chi_i(s)=0\quad(s\ge3r_A/4).
\]
Because the strong state belongs to $\Csafe_\eta$, at most one such face cutoff is active at a point. Define the global augmented energy
\begin{equation}
\Eaug_\lambda(u\mid v)=
\int_\Omega\left[
\mathcal G_\lambda(u,v)
+\varepsilon\sum_{i=1}^N
\chi_i(v_i)(v_i+\lambda)
\Phi\left(\frac{u_i+\lambda}{v_i+\lambda}\right)
\right]dx.
\label{eq:Eaug}
\end{equation}

The atlas is organized so that the singular face core and the interior patch overlap only after the regularized state has acquired a fixed normal lower bound. This avoids the inconsistent hierarchy in which a cutoff would be active outside the domain of the local face estimate.

\begin{lemma}[Face/interior handoff]\label{lem:handoff}
After decreasing the fixed constants in \eqref{eq:hierarchy}, the same intrinsic normal alternative used in Section~\ref{sec:normal} has the following consequences.
\begin{enumerate}[label=(\roman*)]
\item If $0\le v_i\le2s_b$, a first exit through $u_i=k_b$ is impossible while the tangential tube is valid.
\item If $2s_b\le v_i\le r_A/2$ and $u_i$ reaches $r_A$, the upper normal branch is incompatible with the center value; hence a child cylinder satisfies $u_i\ge\delta r_A$.
\item If $u_i<r_A$ until $v_i$ reaches $r_A/2$, a fixed finite chain of normal alternatives produces a lower bound $u_i\ge c_Ir_A$ before the perspective cutoff switches off.
\end{enumerate}
\end{lemma}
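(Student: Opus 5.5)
The plan is to deduce all three items from the intrinsic normal alternative of Section~\ref{sec:normal}, which I take in the following form. On an intrinsic cylinder $Q_\rho$ adapted to the scale $k$ (time scaled by $(\lambda+k)^{-1}\rho^2$), where the normal coordinate $u_i$ solves the scalar Schur equation with $a_\lambda\asymp\lambda+u_i$, $|b|\lesssim u_i$ and the tangential tube is valid, either $u_i\ge\delta k$ on a child cylinder, or $u_i\le(1-\delta)k=\kappa k$ on a child cylinder. The constant $\delta$ does not depend on $\lambda/k$. The second ingredient is the strong-state information. Since $v$ is $C^2$, the set where $v_i\le 2s_b$ and the set where $v_i\in[2s_b,r_A/2]$ have fixed parabolic width. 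Comparison of $u_i$ with $v_i$ on such sets goes through the perspective term in $\Eaug_\lambda$ together with Lemma~\ref{lem:statecoerc}. Everything is done inside the first-exit bootstrap, so I may assume the current (pre-exit) energy bound $\Eaug_\lambda\lesssim\Flam_\lambda$ and the validity of the tangential tube.

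For (i), I argue by contradiction at a first exit point where $u_i=k_b$ while $v_i\le 2s_b$ nearby. I apply the alternative at scale $k=k_b$ on a cylinder that ends at the exit time. The upper branch gives $u_i\ge\delta k_b$ on a child cylinder of fixed intrinsic measure. On that child cylinder $R=(u_i+\lambda)/(v_i+\lambda)\ge \delta k_b/(2s_b+\lambda)\ge \delta/(2c_s)>8/\kappa$, using $c_s<\delta\kappa/16$. Since $\Phi(R)\gtrsim R\log R$ for large $R$, the perspective integral over the child cylinder is bounded below by a fixed positive constant times its measure. This contradicts smallness of $\Eaug_\lambda$ for $\lambda\le\lambda_0$. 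The lower branch gives $u_i\le\kappa k_b<k_b$ on the child cylinder. Iterating the alternative down to the exit point, continuity of the smooth approximation then contradicts $u_i=k_b$ there. This is exactly where the choice $c_s<\delta\kappa/16$ enters.

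For (ii), I take $k=r_A$ at the point where $u_i$ reaches $r_A$ while $v_i\in[2s_b,r_A/2]$. The lower branch would give $u_i\le\kappa r_A$ near the center. I must show this is incompatible with the center value $u_i=r_A$. The intended route is the upper-semicontinuity half of De Giorgi: a lower bound $\sup u_i$ on the child cylinder forces $u_i<r_A$ at the center. Hence the upper branch holds and $u_i\ge\delta r_A$ on a child cylinder.

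For (iii), I cover the time interval during which $v_i$ rises from $2s_b$ to $r_A/2$ by a fixed number $n$ of intrinsic cylinders. The number $n$ is bounded in terms of $\|v\|_{C^2}$, $r_A$, and $s_b$, since the intrinsic time scale at level $k\ge c r_A$ is at least $\rho^2/(1+r_A)$. On each cylinder I apply the alternative. The lower branch is excluded by the perspective energy, as in (i) with the roles reversed: $R\le\kappa$-type bounds against $v_i\gtrsim s_b$ give $\Phi(R)\ge\Phi(\kappa)>0$. The upper branch propagates a lower bound by a factor $\delta$ each step, giving $c_I=\delta^n$. Because $\chi_i$ is still equal to $1$ for $v_i\le r_A/2$, the perspective term is active throughout.

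The main obstacle is (ii)/(iii). The alternative as stated gives information on child cylinders, not at the center. I would need a De Giorgi-type "expansion of positivity" forward in time, not just a dichotomy, and this step may require redefining the intrinsic scale so that it is uniform as $\lambda/k\to0$.
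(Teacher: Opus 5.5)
\paragraph{Items (i) and (ii).} These follow the paper's argument, with a few corrections.

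\begin{itemize}
\item You have the branch names reversed relative to the statement. There, the \emph{upper} branch is $u_i\le(1-\delta)k$ and the \emph{lower} branch is $u_i\ge\delta k$.
\item For $R\ge1$ the paper's $\Phi$ is $R-1-\log R$, which grows linearly, not like $R\log R$. Your lower bound in (i) survives anyway: for $R\ge 8/\kappa$ one has $(v_i+\lambda)\Phi(R)\ge c(u_i+\lambda)\ge c\,\delta k_b$.
\item The obstacle you flag for (ii) does not exist. The child in Lemma~\ref{lem:DG} is concentric, i.e.\ it shares the top vertex $(t_0,x_0)$ of the backward cylinder $Q_R^k=(t_0-R^2/(\lambda+k),t_0)\times B_R(x_0)$. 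On a smooth approximation the branch $u_i\le\kappa r_A$ therefore holds at $(t_0,x_0)$ by continuity, and it contradicts $u_i(t_0,x_0)=r_A$ outright. No semicontinuity or expansion of positivity is needed. For the same reason, the extra iteration in (i) is unnecessary.
\end{itemize}

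\paragraph{The gap is in (iii).} Both steps of your time-covering chain fail.

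First, the branch $u_i\le\kappa k$ cannot be excluded by the energy. Lemma~\ref{lem:DG} requires $u_i\le k$ on the cylinder, so the only level your hypothesis supplies is $k=r_A$. While $v_i\le r_A/2<\kappa r_A$, the bound $u_i\le\kappa r_A$ is compatible with $u_i\approx v_i$, which is exactly the regime the estimate describes. A bound $R\le\kappa$ would need $v_i\gtrsim k$, not $v_i\gtrsim s_b$.

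Second, the alternative is a dichotomy on a single cylinder, not a propagation mechanism. Knowing $u_i\ge\delta k$ on one cylinder supplies no hypothesis for the next cylinder in time. So the claimed loss of one factor $\delta$ per step is unsupported; it is precisely the expansion of positivity you concede you lack.

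\paragraph{The missing idea.} Iterate in the \emph{level}, not in time, at the point where $v_i$ reaches $r_A/2$.
\begin{itemize}
\item Start at $k_0=r_A$, which is admissible because $u_i<r_A$.
\item Each upper branch $u_i\le\kappa k_j$ on the child is exactly the hypothesis $r\le k_{j+1}$, with $k_{j+1}=\kappa k_j$, on the next nested intrinsic cylinder. The factor $\sqrt{1-\delta}$ in $R'$ ensures that cylinder fits, since $\kappa(\lambda+k)\le\lambda+\kappa k$.
\item Suppose $n$ consecutive branches are upper. Then $u_i\le\kappa^n r_A=:c_Ir_A$ on a fixed child cylinder on which $v_i$ is still close to $r_A/2$ (say $v_i\ge r_A/4$).
\item There Lemma~\ref{lem:statecoerc} gives $\mathcal G_\lambda\ge\frac14|u-v|^2\ge c\,r_A^2$ on a fixed ball. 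This contradicts the small augmented energy once $c_I$ is small.
\end{itemize}
Hence some level $j\le n$ yields the lower branch $u_i\ge\delta\kappa^j r_A\ge\delta c_I r_A$, a fixed multiple of $r_A$. This holds on a backward child cylinder, i.e.\ before the cutoff switches off.
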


\begin{proof}
The proof is a finite-depth application of Lemma~\ref{lem:DG}. In the face core, a lower branch would give $u_i\ge\delta k_b$ on a child cylinder while $v_i\le2s_b<\delta k_b/4$. The perspective density then has a fixed positive lower bound, contradicting small augmented energy. At the outer barrier $u_i=r_A$, the upper branch gives $u_i\le\kappa r_A$ at the center and is impossible, so the lower branch creates the desired interior-entry bound. Finally, iterate the finite levels $r_A,\kappa r_A,\ldots,c_Ir_A$. If all branches are upper, $u_i\le c_Ir_A$ while $v_i\ge r_A/2$, again contradicting the augmented state energy for $c_I$ sufficiently small.
\end{proof}

\section{Uniform augmented affine estimate}\label{sec:affine}

Use constant affine Schur coordinates near the face $u_i=0$ and split a spatial error gradient into the single normal channel $E_n$ and the active tangential block $E_\tau$.

\begin{proposition}[Full-block affine closure]\label{prop:affine}
There exist fixed $r_A,\delta_A,\varepsilon,c,C>0$, independent of $\lambda$, such that if
\[
u_i,v_i\le r_A,
\qquad
|\pi_i(u)-\pi_i(v)|\le\delta_A,
\]
then the local augmented density
\[
\mathcal H_{\lambda,i}(u,v)
=\mathcal G_\lambda(u,v)
+\varepsilon(v_i+\lambda)
\Phi\left(\frac{u_i+\lambda}{v_i+\lambda}\right)
\]
satisfies for smooth solutions
\begin{align}
\frac d{dt}\int_\Omega\mathcal H_{\lambda,i}
&+c\int_\Omega|E_\tau|^2
+c\int_\Omega\omega_i(u,v)|E_n|^2\notag\\
&\le C\int_\Omega\mathcal H_{\lambda,i}|\nabla v|^2
+C\lambda^2\int_\Omega\frac{|\nabla v_i|^2}{(v_i+\lambda)^2},
\label{eq:affine}
\end{align}
up to the standard localization terms, where
\[
\omega_i(u,v)=\min\left\{1,\frac{v_i+\lambda}{u_i+\lambda}\right\}.
\]
No normal-ratio corridor is required.
\end{proposition}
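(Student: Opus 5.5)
We compute $\frac{d}{dt}\int_\Omega\mathcal H_{\lambda,i}$ as the sum of two pieces: the geodesic part $\frac{d}{dt}\int\mathcal G_\lambda(u,v)$ and the perspective part given by \eqref{eq:perspective}. Throughout we work in constant affine Schur coordinates adapted to the face $u_i=0$, so that the error gradient splits into the normal channel $E_n$ and the tangential block $E_\tau$.

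\textbf{Geodesic part.} Write $p=D_u\mathcal G_\lambda$ and $q=D_v\mathcal G_\lambda$. Testing \eqref{eq:reg} and \eqref{eq:limit} and integrating by parts (the no-flux condition removes boundary terms) gives
\[
-\int\Bigl(\nabla u^\top A_\lambda(u)\,\nabla p+\nabla v^\top M(v)\,\nabla q\Bigr).
\]
Expand $\nabla p$ and $\nabla q$ with the two-point Hessian blocks. Because the geodesic is the minimizer of a jointly convex action \eqref{eq:jointconvex}, to leading order $D_u\mathcal G_\lambda\approx A_\lambda(u)^{-1}(u-v)$, and the mixed blocks nearly cancel along the diagonal. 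Rewriting $\nabla v^\top M(v)=\nabla v^\top A_\lambda(v)-\lambda\nabla v$ isolates the strong-state remainder $\lambda\nabla v$. In the tangential block, $C_\lambda\succeq cI$ uniformly. Since $\pi_i(u)$ and $\pi_i(v)$ lie within $\delta_A$ of each other, the tangential Hessian is a small perturbation of the diagonal one. This yields $c\int|E_\tau|^2$, with errors bounded by $C\int\mathcal G_\lambda|\nabla v|^2$ once we use $\mathcal G_\lambda\gtrsim|u-v|^2$ from Lemma~\ref{lem:statecoerc}. The normal block of the geodesic Hessian is not uniformly controlled; the discussion before the statement explicitly warns that the affine Hessian degenerates in the boundary layer. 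We therefore keep from it only a nonnegative contribution plus cross terms of size $|b|\lesssim r\le r_A$.

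\textbf{Perspective part.} Apply \eqref{eq:perspective} with $J_i^\lambda=(A_\lambda(u)\nabla u)_i$ and $J_i^0=(M(v)\nabla v)_i$. The $i$-th row of the Schur form gives $J_i^\lambda=a_\lambda(u)\nabla u_i+b(u)^\top\nabla u_{\tau}$ with $a_\lambda\asymp\lambda+u_i=U$. Write $\nabla R=(\nabla U-R\nabla V)/V$. The leading term is
\[
-\int\Phi''(R)\,a\,|\nabla R|^2V.
\]
For $R\le1$ this is $\asymp\int(V/R)|\nabla R|^2$, and for $R\ge1$ it is $\asymp\int(V/R^2)\,U|\nabla R|^2$. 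In both cases it dominates $\int\min\{1,V/U\}|E_n|^2$ up to terms involving $\nabla V$, where $E_n\approx\nabla U-R\nabla V$. This is exactly why $\Phi''$ switches from $R^{-1}$ to $R^{-2}$, and it is the source of the weight $\omega_i$.

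The remaining pieces are handled as follows.
\begin{itemize}
\item $R\,(a_\lambda(u)-a(v))\nabla V$ is absorbed by Young's inequality into the good term plus $\int V\Phi(R)|\nabla v|^2\lesssim\int\mathcal H_{\lambda,i}|\nabla v|^2$.
\item The tangential cross fluxes $b\nabla u_\tau$ are paid by $|b|\lesssim r_A$ against the $E_\tau$ dissipation.
\item The pure remainder $\lambda\nabla v_i$ in $J_i^0$, namely $\Phi''(R)\nabla R\cdot R\lambda\nabla v_i$, is bounded by Young's inequality by the good term plus $C\lambda^2|\nabla v_i|^2/V^2$ times a bounded factor. This is the forcing term in \eqref{eq:affine}.
\end{itemize}
Choosing $\varepsilon$ small relative to $c$ lets the tangential cross terms of the perspective part be absorbed. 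Choosing $r_A,\delta_A$ small then lets the normal cross terms of the geodesic part be absorbed by $\varepsilon\,\omega_i|E_n|^2$.

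\textbf{Main obstacle.} The hardest step is uniformity in the regime $R\gg1$ or $R\ll1$, with no ratio corridor. There the weight $\Phi''(R)aV$ and the factor $R$ multiplying $\lambda\nabla v_i$ must balance. For $R\ge1$, the estimate $R^2\Phi''(R)\cdot(\ldots)\le1$ makes $R\lambda\nabla v_i$ controllable. For $R\le1$, we must check that the leftover factor $R\Phi''(R)=1$ does not blow up. We would first verify these two regimes separately with Young's inequality weighted by $V/U$, so that every constant is $\lambda$-independent. The standard cutoff localization terms are left aside, as in the statement.
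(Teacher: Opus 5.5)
\textbf{Assessment.} There is a genuine gap, and it sits in the geodesic part, not the perspective part.

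\textbf{What is already right.} Your perspective part is essentially the paper's. Optimizing the fiberwise form in $E_n=\nabla U-R\nabla V=V\nabla R$ is exactly how the paper's remainders $\mathcal R_\pm$ arise. Young's inequality for $R\lambda\nabla v_i$ against $\Phi''(R)a_\lambda V|\nabla R|^2$, with $R\Phi''(R)=\omega_i$, correctly gives the forcing $\lambda^2|\nabla v_i|^2/V^2$. The paper instead splits the row as $\lambda\nabla(u_i-v_i)+\lambda\nabla v_i$, but that difference is cosmetic. One small slip: for $R\le1$ the density is $\Phi''(R)a_\lambda V|\nabla R|^2\asymp V^2|\nabla R|^2=|E_n|^2$, not $(V/R)|\nabla R|^2$.

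\textbf{Where the linearization fails.} The gap is the normal block of the geodesic part. The linearization $D_u\mathcal G_\lambda\approx A_\lambda(u)^{-1}(u-v)$ holds only when $u$ and $v$ are close on the scale of the metric. In the normal direction $d_{g_\lambda}\asymp|\sqrt U-\sqrt V|$, so this presupposes $R$ near $1$. That is exactly the ratio corridor the proposition says is not needed. Outside it you cannot ``keep only a nonnegative contribution'': the block is indefinite, and its cross term couples $E_n$ to $\nabla v_i$ (normal--normal), so it is not of size $|b|$.

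\textbf{Scalar model.} Take two solutions of the same scalar equation, $\partial_tU=\divv(U\nabla U)$ and $\partial_tV=\divv(V\nabla V)$. Take $\mathcal G=2(\sqrt U-\sqrt V)^2$, which is the normal part of $\mathcal G_\lambda$ when the normal mobility is $\asymp U$. A direct computation gives
\[
\frac{d}{dt}\int\Bigl[2(\sqrt U-\sqrt V)^2+\varepsilon V\Phi(R)\Bigr]
=-\int\bigl(R^{-1/2}+\varepsilon\,\omega_i\bigr)\,E_n\cdot\bigl(E_n-(1-R)\nabla V\bigr).
\]
Several consequences follow.
\begin{enumerate}
\item The sharp pointwise lower bound for the geodesic part leaves the remainder $\tfrac14R^{-1/2}(1-R)^2|\nabla V|^2$.
\item This remainder carries no factor $r_A$.
\item It is not bounded by $C\mathcal H|\nabla v|^2$ uniformly as $R\to0$ or $R\to\infty$, since here $\mathcal H\lesssim V\max\{1,R\}$.
\item Because $\omega_i=\min\{1,R^{-1}\}\ll R^{-1/2}$ in both separated regimes, the term $\varepsilon\omega_i|E_n|^2$ cannot absorb it for any choice of $\varepsilon,r_A,\delta_A$.
\item Adding the perspective term does not help: in this model it contributes the same indefinite structure, only with the weaker weight $\varepsilon\omega_i$.
\end{enumerate}

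\textbf{What the paper does instead.} The paper does not bound this block pointwise. It obtains full-block uniformity by contradiction, using a normalized violating sequence $|E_{\tau,m}|^2+\omega_{i,m}|E_{n,m}|^2=1$ with $r_A,\delta_A\downarrow0$.
\begin{enumerate}
\item When $R$ stays in a compact subset of $(0,\infty)$, it invokes the compact-corridor affine estimate.
\item For $R\to0$ or $R\to\infty$, it argues that the normalized scalar normal form converges to one of the positive forms $\mathcal R_\pm$.
\item It disposes of the mixed blocks through the block-diagonal Schur structure at $r_A=0$.
\end{enumerate}
Your proposal has no counterpart to the separated-ratio step. The computation above shows that this step is where the real difficulty lies, and a direct Young estimate on the geodesic Hessian cannot supply it. Any complete argument must explain why the indefinite geodesic normal contribution is controlled when $R\to0$ and when $R\to\infty$.
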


\begin{proof}
The perspective part is given by \eqref{eq:perspective}. On the scalar normal fiber, optimization in the error derivative gives the exact positive remainders
\[
\mathcal R_-=
\frac{(\lambda+u_i)[b_0(v_i)-b_\lambda(u_i)]^2}
{4(\lambda+v_i)^2b_\lambda(u_i)}
\quad(R\le1),
\]
\[
\mathcal R_+=
\frac{[b_0(v_i)-b_\lambda(u_i)]^2}
{4(\lambda+v_i)b_\lambda(u_i)}
\quad(R\ge1),
\]
which yield the weight $\omega_i$ and the forcing scale in \eqref{eq:affine}. To justify uniformity of the full block, argue by contradiction with a normalized violating sequence
\[
|E_{\tau,m}|^2+\omega_{i,m}|E_{n,m}|^2=1
\]
while $r_A,\delta_A\downarrow0$. If the normal ratio stays in a compact subset of $(0,\infty)$, the two scales are comparable and the compact-corridor affine estimate applies. If the ratio tends to $0$ or $\infty$, the scalar normal form converges after its natural normalization to one of the two positive forms above. At $r_A=0$ the Schur matrix is block diagonal and the active face block is strictly positive. The mixed blocks therefore vanish in the limit, and for small $r_A+\delta_A$ are absorbed by Young's inequality.

For the strong-state terms use the exact row decomposition
\begin{align*}
J_i^\lambda-RJ_i^0
={}&[\lambda e_i^\top+u_i\mathcal A_i(u)]\nabla(u-v)+\lambda\nabla v_i\\
&+[u_i\mathcal A_i(u)-Rv_i\mathcal A_i(v)]\nabla v.
\end{align*}
The last bracket always contains a state error, since
\[
|u_i-Rv_i|=\frac{\lambda|u_i-v_i|}{v_i+\lambda}\le|u_i-v_i|.
\]
It is therefore bounded by $C\mathcal H_{\lambda,i}|\nabla v|^2$ after Young's inequality. The only pure strong-state term is $\lambda\nabla v_i$, giving the final term in \eqref{eq:affine}.
\end{proof}

\section{Uniform Schur-normal intrinsic alternative}\label{sec:normal}

In the same coordinates the normal equation reads
\begin{equation}
r_t-\divv(a_\lambda(r,z)\nabla r+b(r,z)^\top\nabla z)=0,
\label{eq:normal}
\end{equation}
with
\[
c(\lambda+r)\le a_\lambda\le C(\lambda+r),
\qquad |b(r,z)|\le Cr.
\]
For
\[
Q_R^k=(t_0-R^2/(\lambda+k),t_0)\times B_R(x_0)
\]
set
\[
\mathfrak B_{R,k}
=\frac{R^2}{k(\lambda+k)}\|\nabla z_v\|_{L^\infty(Q_R^k)}^2,
\qquad
\mathfrak R_{R,k}
=\frac1{kR^d}\int_{Q_R^k}|\nabla(z-z_v)|^2.
\]

\begin{lemma}[Uniform intrinsic De Giorgi alternative]\label{lem:DG}
There exist universal $\delta\in(0,1)$, $\widehat\theta\in(0,1/4)$ and $\varepsilon_0>0$ such that if
\[
0\le r\le k\le r_A,
\qquad
\mathfrak B_{R,k}+\mathfrak R_{R,k}\le\varepsilon_0,
\]
then on the concentric intrinsic child of radius
$R'=\widehat\theta\sqrt{1-\delta}\,R$ one has
\[
r\ge\delta k
\qquad\text{or}\qquad
r\le(1-\delta)k.
\]
The constants are independent of $a=\lambda/k\in[0,\infty)$ and the same statement holds on one-sided initial and no-flux boundary half-cylinders.
\end{lemma}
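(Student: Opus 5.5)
We prove Lemma~\ref{lem:DG} by rescaling to a unit intrinsic cylinder and running De Giorgi's two-alternative argument for the scalar equation \eqref{eq:normal}. The cross term $b^\top\nabla z$ is treated as a perturbation, and it is controlled by $\mathfrak B_{R,k}+\mathfrak R_{R,k}$. Set $\tilde r(s,y)=r(t_0+sR^2/(\lambda+k),x_0+Ry)/k$ on $Q_1=(-1,0)\times B_1$. Then $0\le\tilde r\le1$ and
\[
\tilde r_s-\divv\big(\tilde a\nabla\tilde r+\tilde b^\top\nabla\tilde z\big)=0,\qquad
\tilde a=\frac{a_\lambda}{\lambda+k}\in\Big[c\,\frac{a+\tilde r}{a+1},\,C\Big],
\]
with $a=\lambda/k$. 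The rescaled drift satisfies $|\tilde b^\top\nabla\tilde z|\lesssim \tilde r\,\frac{kR}{k(\lambda+k)}|\nabla z|$. Split $\nabla z=\nabla z_v+\nabla(z-z_v)$. In the rescaled variables, the $L^\infty$ part has $L^2$ size controlled by $\mathfrak B_{R,k}^{1/2}$, and the error part has $L^2(Q_1)$ norm squared bounded by a multiple of $\mathfrak R_{R,k}$. The point is that the normalizations in $\mathfrak B$ and $\mathfrak R$ are exactly the intrinsic ones. The forcing therefore enters the energy estimates only as a term of size $\varepsilon_0^{1/2}$, uniformly in $a\in[0,\infty)$.

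The argument is the usual dichotomy on the measure of the set $\{\tilde r\le 1/2\}$ in a sub-cylinder. \textbf{Case 1:} $|\{\tilde r\le1/2\}\cap Q_{1/2}|\le\nu|Q_{1/2}|$. Run De Giorgi on the truncations $(\ell_j-\tilde r)_+$ with $\ell_j\downarrow\delta$. On $\{\tilde r\le\ell_j\}$ the mobility could be as small as $c\,a/(a+1)$, so it is not uniformly elliptic. To handle this, use the intrinsic level substitution $w=\int_0^{\tilde r}\tilde a$, whose diffusion is comparable to $(a+\tilde r)/(a+1)\ge c\,\delta$ on the relevant levels above $\delta$. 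Equivalently, take energy estimates with test function $\tilde a$-weighted truncations. Caccioppoli plus the parabolic Sobolev embedding give the recursive inequality $Y_{j+1}\le Cb^jY_j^{1+2/(d+2)}+C\varepsilon_0^{1/2}$. Choosing $\nu,\varepsilon_0$ small yields $\tilde r\ge\delta$ on the child cylinder. \textbf{Case 2:} the set $\{\tilde r\le 1/2\}$ has measure at least $\nu$. Here the equation is uniformly parabolic near the top levels, since there $\tilde a\ge c/2$. Apply the expansion-of-positivity/logarithmic lemma to $(\tilde r-(1-2^{-m}))_+$ to propagate the measure information forward in time. A second De Giorgi iteration on upper truncations then gives $\tilde r\le1-\delta$. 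The radius $R'=\widehat\theta\sqrt{1-\delta}R$ accounts for the time shrinking of intrinsic cylinders: the child at level $(1-\delta)k$ has time length $R'^2/(\lambda+(1-\delta)k)$, which must fit inside the parent.

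For the boundary versions, the no-flux condition makes the truncations admissible test functions without cutoff in the normal direction, after flattening the $C^{1,1}$ boundary. The initial half-cylinders use truncation levels compatible with the initial datum, as in standard De Giorgi theory.

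The main obstacle is uniformity in $a$ in Case~1 as $a\to0$, where $\tilde a\sim\tilde r$ is a porous-medium-type degeneracy at the bottom levels. I would first try to exploit that the levels stay above $\delta$, so the degeneracy is at a fixed distance, together with the bound $|b|\lesssim r$, which makes the cross term vanish at the same rate as the mobility. This should put the drift in the form $\tilde r\,\tilde F$ with $\|\tilde F\|_{L^2}\lesssim\varepsilon_0^{1/2}$, uniformly in $a$. If this fails, the fallback is a compactness contradiction in $a\in[0,\infty]$ using the limiting equations: at $a=\infty$ the limit is the heat equation, and at $a=0$ it is a porous-medium-type equation.
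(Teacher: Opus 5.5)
The step that fails is the treatment of the forcing. A recursion $Y_{j+1}\le Cb^jY_j^{1+2/(d+2)}+C\varepsilon_0^{1/2}$ does not force $Y_j\to0$ for any fixed $\varepsilon_0>0$. The additive floor persists and the factor $b^j$ eventually swamps it, so no pointwise bound on the child cylinder follows, however small you take $\nu$ and $\varepsilon_0$. The same defect sits in the final De Giorgi step of your Case~2.

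The $\mathfrak B_{R,k}$ part is easy to repair. Keep it as an $L^\infty$ bound, so that its contribution on $A_j$ is $\lesssim\mathfrak B_{R,k}|A_j|$; this becomes superlinear after the Sobolev gain. The $\mathfrak R_{R,k}$ part is the real issue, because $\iint_{A_j}|\nabla(\tilde z-\tilde z_v)|^2$ is not bounded by any power of $|A_j|$. The paper keeps the level factor $\ell$ produced by $|b^\top\nabla z|^2/(\lambda+r)\le Cr|\nabla z|^2$ in \eqref{eq:caccnormal}. It then asserts that the forcing enters multiplied by the current level and by $\mathfrak B_{R,k}+\mathfrak R_{R,k}$, i.e.\ as a perturbation of the recurrence constant rather than as an additive floor. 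You need an argument of that multiplicative type, or additional control of $\nabla(z-z_v)$ on sub-cylinders (e.g.\ $L^q$ with $q>d+2$, or Morrey smallness at all smaller scales). The paper only asserts this step, and a single-scale $L^2$ bound does not by itself supply it, so this is exactly where your write-up must add content.

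Apart from this, your skeleton is the paper's: intrinsic rescaling, two alternatives, an $a$-uniform level transform, and the bound $|b|\lesssim r$. The difference is that the roles of the alternatives are mirrored. The paper runs the small-measure iteration on upper truncations, which live where $q\ge1/4$, so $\alpha_a(q)=(a+q)/(a+1)\ge q\ge1/4$. It handles the degenerate lower side by expansion of positivity, made uniform by $\Psi_a=\int_0^q\sqrt{\alpha_a}$. There the Caccioppoli inequality controls exactly $|\nabla\Psi_a(q)|^2=\alpha_a|\nabla q|^2$, and $\Psi_a(1/2)-\Psi_a(1/4)\ge c_*$, so the isoperimetric lemma has $a$-independent constants.

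You run the small-measure iteration on the degenerate side instead. This can work, but not for the reason you give. The truncation $(\ell_j-\tilde r)_+$ is supported on $\{\tilde r<\ell_j\}$, which reaches $\tilde r=0$ where $\tilde a\sim a/(a+1)$, so the degeneracy is not at a fixed distance. The correct mechanism is the Kirchhoff truncation $W=(\phi(\ell)-\phi(\tilde r))_+$ with $\phi'=\alpha_a$ (not $\tilde a$, which depends on $z$). Since $\tilde r\le\alpha_a(\tilde r)\le1$, one has $\tfrac{\ell}{2}(\ell-\tilde r)_+\le W\le(\ell-\tilde r)_+$, and the energy controls $\iint\eta^2|\nabla W|^2$. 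The time density $\int_{\tilde r}^{\ell}(\phi(\ell)-\phi(\sigma))\,d\sigma$ lies between $\tfrac{\ell}{4}W^2$ and $2\ell^{-2}W^2$, uniformly in $a$ once $\ell\ge\delta$.

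With that fix, your split treats the degeneracy more elementarily: it needs only a small-measure lemma with levels bounded below, while the log and shrinking lemmas are used on the uniformly parabolic side. The paper's split instead needs the $\Psi_a$ device in the degenerate expansion step.
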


\begin{proof}
For $w=(\ell-r)_+$ the normal Caccioppoli estimate is
\begin{align}
&\sup_t\int\eta^2w^2
+c\iint\eta^2(\lambda+r)|\nabla w|^2\notag\\
&\le C\iint w^2\bigl(|\eta_t|+(\lambda+k)|\nabla\eta|^2\bigr)\notag\\
&\quad+C\ell\iint_{\{r<\ell\}}\eta^2|\nabla(z-z_v)|^2
+C\ell|Q_R^k|\|\nabla z_v\|_\infty^2.\label{eq:caccnormal}
\end{align}
The key estimate is
\[
\frac{|b^\top\nabla z|^2}{\lambda+r}
\le C r|\nabla z|^2,
\]
which produces the level factor $\ell$.

Rescale $r=kq$, $x=x_0+Ry$, $t=t_0+R^2(\lambda+k)^{-1}s$. The diffusion coefficient is comparable to
\[
\alpha_a(q)=\frac{a+q}{a+1}.
\]
Introduce
\[
\Psi_a(q)=\int_0^q\sqrt{\frac{a+s}{a+1}}\,ds.
\]
Then $|\nabla\Psi_a(q)|^2=\alpha_a(q)|\nabla q|^2$, while uniformly for $a\ge0$,
\[
\Psi_a(1/2)-\Psi_a(1/4)\ge c_*>0.
\]
Thus the parabolic De Giorgi isoperimetric argument applied to transformed truncations has constants independent of $a$. If the upper-level set is small, the iteration takes place where $q\ge1/4$ and is uniformly elliptic, giving the upper alternative. Otherwise the expansion-of-positivity argument applied to lower truncations yields the lower alternative. The forcing always enters multiplied by the current level and by $\mathfrak B_{R,k}+\mathfrak R_{R,k}$, so a universal smallness threshold preserves the recurrence. The initial and boundary variants use the zero initial relative trace and the exact no-flux cancellation in constant Schur coordinates.
\end{proof}

\section{Mean-free tangential iteration and temporal-flux control}\label{sec:meanfree}

Let $w=z-z_v$. On an upper normal chain set
\[
k_{n+1}=\kappa k_n,
\quad R_{n+1}=\theta R_n,
\quad \mu_n=\lambda+k_n,
\quad 0<\theta<\kappa/8,
\]
with intrinsic cylinders $Q_n$. At each time define the local spatial mean
\[
m_n(t)=\fint_{B_{R_n}}w(t,x)\,dx
\]
and the mean-free excess
\begin{equation}
Y_n=\frac1{\mu_n^2}\fint_{Q_n}|w-m_n(t)|^2.
\label{eq:Yn}
\end{equation}
The time-dependent spatial constants are deliberately removed from $Y_n$.

\begin{lemma}[Mean-free contraction]\label{lem:meanfree}
For sufficiently small fixed atlas constants there exist $q_0\in(0,1)$ and $C>0$, independent of $0<\mu_n\le1$, such that on every upper step
\begin{equation}
Y_{n+1}\le q_0Y_n+C(k_n^2+S_n^2+R_n^2),
\qquad
S_n=\frac{R_n}{\mu_n}K_v.
\label{eq:Ycontract}
\end{equation}
Consequently
\[
\sum_{n\ge0}\sqrt{Y_n}<\infty
\]
uniformly in the number of upper steps and in $\lambda$.
\end{lemma}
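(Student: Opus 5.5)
We prove \eqref{eq:Ycontract} by a Campanato-type comparison on the tangential block of the Schur system, with the spatial mean removed time slice by time slice. On the upper step the normal alternative of Lemma~\ref{lem:DG} gives $r\le k_n$ on $Q_n$. In the Schur form $\mathcal A_\lambda$ the tangential error $w=z-z_v$ then solves
\[
w_t-\divv\big(C_\lambda\nabla w\big)=\divv F_n,
\]
where $F_n$ collects three contributions. The first is the cross flux $b\,\nabla r$, bounded by $Ck_n|\nabla r|$. The second is the coefficient mismatch $(C_\lambda(r,z)-C_\lambda(r_v,z_v))\nabla z_v$, bounded by $C(|w|+k_n)K_v$. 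The third is the strong-state source from \eqref{eq:limit}. The intrinsic time scale of $Q_n$ is $R_n^2/\mu_n$, while $C_\lambda\succeq cI$ is uniformly parabolic. After rescaling $t=R_n^2\mu_n^{-1}s$, the tangential equation therefore becomes $\mu_n^{-1}$-fast in time relative to the cylinder. This is exactly why the constant-in-space mode $m_n(t)$ cannot be controlled by parabolic Caccioppoli alone, and it must be subtracted.

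\textbf{Step 1 (mean-free Caccioppoli/Poincaré).} Test the equation with $(w-m_n(t))\zeta^2$, with $\zeta$ a spatial cutoff on $B_{R_n}$. Because $m_n$ depends only on $t$, the term $\dot m_n\int(w-m_n)\zeta^2$ does not vanish exactly. I handle it by taking $\zeta$ so that the weighted mean is used, or by controlling $\dot m_n$ through the local flux identity. Testing with $\zeta^2$ alone gives
\[
|B|\,\dot m_n=-\int C_\lambda\nabla w\cdot\nabla\zeta^2-\int F_n\cdot\nabla\zeta^2 ,
\]
which is bounded by $R_n^{-1}$ times the $L^2$ norms of the fluxes. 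This is the temporal-flux control. It gives, for a.e.\ $t$, an energy inequality for $\|w-m_n\|_{L^2(B_{R_n/2})}$ in which time derivatives appear only through this flux-controlled term.

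\textbf{Step 2 (comparison and decay).} On $Q_n$, freeze the coefficients at $C_\lambda(0,\bar z)$ and compare $w$ with the solution $h$ of the constant-coefficient system. Mean-free Campanato decay of the frozen system at spatial ratio $\theta$ gives
\[
\fint_{Q_{n+1}}|h-\bar h(t)|^2\le C\theta^2\fint_{Q_n}|h-\bar h(t)|^2 .
\]
The time interval of $Q_{n+1}$ is $\theta^2R_n^2/\mu_{n+1}=(\theta^2/\kappa)R_n^2/\mu_n$, which is smaller than the parent interval because $\theta<\kappa/8$, so $Q_{n+1}\subset Q_n$. The energy error $w-h$ is controlled by $\|F_n\|^2$ and by the coefficient oscillation, which is small once the atlas constants $r_A,\delta_A$ are small. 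Dividing by $\mu_{n+1}^2=\kappa^2\mu_n^2$, the error contributions become:
\begin{itemize}
\item the cross flux, contributing $k_n^2\mu_n^{-2}\fint|\nabla r|^2$ times $R_n^2$, which the normal Caccioppoli estimate \eqref{eq:caccnormal} bounds by $Ck_n^2$;
\item the mismatch term, contributing $S_n^2=(R_nK_v/\mu_n)^2$;
\item the lower-order source, contributing $R_n^2$.
\end{itemize}
Choosing $\theta$ small so that $C\theta^2\kappa^{-2}+(\text{coefficient oscillation})\le q_0<1$ yields \eqref{eq:Ycontract}.

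\textbf{Step 3 (summability).} Iterating gives
\[
Y_n\le q_0^nY_0+C\sum_{j<n}q_0^{n-1-j}\big(k_j^2+S_j^2+R_j^2\big).
\]
The sequences $k_j=\kappa^jk_0$ and $R_j=\theta^jR_0$ are geometric. For $S_j$ we have $S_j\le (R_0K_v/\mu_0)\,(\theta/\kappa)^j$, since $\mu_j\ge\kappa^j\mu_0$, and $\theta/\kappa<1/8$, so $S_j$ is geometric as well, uniformly in $\lambda$. Taking square roots, a discrete convolution of geometric sequences is summable, and $\sum\sqrt{Y_n}\le C(\sqrt{Y_0}+k_0+S_0+R_0)$, independently of the number of upper steps and of $\lambda$.

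The main obstacle is Step 1: showing that $\dot m_n$ does not reintroduce a term of size $\mu_n^{-1}$ once rescaled. I would first try the weighted mean against $\zeta^2$, for which the cross term $\dot m_n\int(w-m_n)\zeta^2$ vanishes identically. With that choice the flux identity is needed only to compare weighted and unweighted means across consecutive scales, and that comparison costs a factor $C$ absorbed by choosing $\theta$ small.
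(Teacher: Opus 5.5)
Your outline follows the same route as the paper's sketch: normalize by $\mu_n$, test with a cutoff times the weighted-mean-free part so the derivative of the mean drops out, use Campanato decay with constants blind to the coefficient of $\partial_\tau$, show the Schur cross term is small, and sum geometrically. Two steps fail as written.

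\textbf{The cross flux.} You replace $b\nabla r$ by $Ck_n|\nabla r|$ and then assert that \eqref{eq:caccnormal} bounds $k_n^2\mu_n^{-2}R_n^2\fint_{Q_n}|\nabla r|^2$ by $Ck_n^2$. But \eqref{eq:caccnormal} controls only the degenerate quantity: roughly $\fint_{Q_n}(\lambda+r)|\nabla r|^2\lesssim k_n^2\mu_n/R_n^2$, plus tangential terms. On an upper step $r$ may be close to $0$, where the weight is only $\lambda$. So the only unweighted bound \eqref{eq:caccnormal} yields is $k_n^4/(\lambda\mu_n)$, which is not uniform as $\lambda\downarrow0$. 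What makes the term small is the Schur structure $|b|\le Cr$, which you discarded. It gives $|b\nabla r|^2\le Cr^2|\nabla r|^2\le Ck_n(\lambda+r)|\nabla r|^2$. Since the normalized forcing is $G_n=(R_n/\mu_n)F_n$, this contributes $Ck_n^3/\mu_n\le Ck_n^2$; this is the paper's structural factor $k_n^2/\mu_n\le k_n$.

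\textbf{The frozen decay (the real obstacle).} The second gap is more serious, and it, not Step 1, is where the difficulty lies; Step 1 is settled exactly by the weighted mean, as you note. The claim $\fint_{Q_{n+1}}|h-\bar h|^2\le C\theta^2\fint_{Q_n}|h-\bar h|^2$ with $C$ independent of $\mu_n$ is false. In parent-normalized variables $h$ solves $\mu_n h_\tau=\divv(C_0\nabla h)$. The child has radius $\theta$ but normalized time length $T_c=\theta^2\mu_n/\mu_{n+1}$. Time-slice Poincar\'e gains $\theta^2$, but $\mu_n$-uniform energy estimates control $\nabla h$ only in $L^2$ in time, so restricting to the shorter window can cost a factor $1/T_c$. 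Control in $L^\infty$ in time comes only from the $\mu_n\partial_\tau$ term and carries $\mu_n^{-1}$.

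Concretely, take $C_0=cI$. Then $h=e^{\omega\tau}\sinh(\sqrt{\mu_n\omega/c}\,y_1)$ is an exact solution with zero spatial mean on centered balls. For $\theta^{-2}\ll\omega\ll c/\mu_n$ it is essentially linear in $y$ and concentrated near $\tau=0$. One computes that the ratio of $\fint_{Q_{n+1}}|h|^2$ to $\fint_{Q_n}|h|^2$ tends to $\theta^2/T_c=\mu_{n+1}/\mu_n$, i.e.\ $Y_{n+1}/Y_n\to\mu_n/\mu_{n+1}\ge1$. Hence for $\mu_n\ll\theta^2$, which is exactly the singular regime, no choice of $\theta$ gives $q_0<1$.

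The same objection applies to the paper's sketch. It asserts this uniformity (``constants that do not use the coefficient in front of $\partial_\tau$'') without addressing this spatially harmonic, temporally concentrated mode. So this is not an ingredient you missed from the paper, but the step does not go through as an argument for \eqref{eq:Ycontract}. One would need a different excess (for instance one taking a supremum in time) or a different time scaling of the child cylinders.
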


\begin{proof}
After intrinsic scaling and the normalization $w=c_n+\mu_n\zeta$, the tangential difference equation has the schematic form
\[
\mu_n\partial_\tau\zeta-\divv(C_n\nabla\zeta)=\divv G_n,
\]
where
\[
\|G_n\|_{L^2(Q_1)}
\le C(k_n+S_n+R_n+\varepsilon_c\sqrt{Y_n}).
\]
Subtract the spatial mean of $\zeta$ at each time. Testing by a cutoff times this mean-free part eliminates the time derivative of the weighted mean exactly. Spatial difference quotients and Poincar\'e--Campanato then give a contraction whose constants do not use the coefficient in front of $\partial_\tau$. The Schur cross term contributes the small structural factor
\[
\frac{k_n^2}{\mu_n}\le k_n.
\]
Choosing the child scale first and then the fixed atlas constants gives \eqref{eq:Ycontract}. Since $k_n$, $R_n$, and $S_n$ decay geometrically, the errors are Dini summable.
\end{proof}

The removed temporal mean is controlled by the local flux, not by a false local conservation law. Let $\chi_n$ be a spatial cutoff for $B_{R_n}$ and define the weighted mean
\[
\widehat m_n(t)=\frac{\int\chi_n w(t)\,dx}{\int\chi_n\,dx}.
\]

\begin{lemma}[Temporal mean-flux estimate]\label{lem:meanflux}
On the upper chain,
\[
\operatorname*{osc}_{t\in I_{n+1}}\widehat m_n(t)
\le C(\sqrt{Y_n}+k_n+S_n+R_n),
\]
and at the terminal time
\[
|\widehat m_{n+1}(t_0)-\widehat m_n(t_0)|
\le C(\sqrt{Y_n}+\sqrt{Y_{n+1}}+k_n+S_n+R_n).
\]
Hence the mean jumps are absolutely summable.
\end{lemma}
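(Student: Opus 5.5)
We plan to test the tangential difference equation against the spatial cutoff $\chi_n$ alone, which gives an exact flux identity for $\widehat m_n$. In the original (unscaled) variables on $Q_n$, the tangential error $w=z-z_v$ solves
\[
\partial_t w-\divv\bigl(C\nabla w\bigr)=\divv F_n,
\]
where $F_n$ collects the Schur cross term $b^\top$-type feedback from the normal equation, the coefficient-freezing errors, and the strong-state remainder involving $\nabla z_v$. Integrating against $\chi_n$, which vanishes on $\partial B_{R_n}$, yields
\[
\frac{d}{dt}\widehat m_n(t)=-\frac{1}{\int\chi_n}\int\nabla\chi_n\cdot\bigl(C\nabla w+F_n\bigr)\,dx .
\]
Near $\partial\Omega$ the no-flux condition in constant Schur coordinates removes the boundary term. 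Note that only $\nabla w=\nabla(w-m_n(t))$ appears, so the unknown constant mode never enters. This is precisely why the mean was removed in Lemma~\ref{lem:meanfree}.

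Next we integrate over $I_{n+1}$, whose length is $R_{n+1}^2/\mu_{n+1}\simeq\theta^2R_n^2/(\kappa\mu_n)$. We use $|\nabla\chi_n|\lesssim R_n^{-1}$ and $\int\chi_n\simeq R_n^d$. Cauchy--Schwarz gives
\[
\operatorname*{osc}_{I_{n+1}}\widehat m_n\lesssim \frac{1}{R_n^{d+1}}\,|I_{n+1}|^{1/2}R_n^{d/2}\Bigl(\|\nabla w\|_{L^2(Q_n')}+\|F_n\|_{L^2(Q_n')}\Bigr).
\]
We then pass to the scaled variables $w=c_n+\mu_n\zeta$, $x=x_0+R_ny$, $t=t_0+R_n^2\mu_n^{-1}s$. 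Here a mean-free Caccioppoli inequality is needed: the same one used in the proof of Lemma~\ref{lem:meanfree}, testing by a cutoff times $\zeta-\langle\zeta\rangle$. It bounds $\|\nabla\zeta\|_{L^2(Q_{1/2})}$ by $C(\sqrt{Y_n}+\|G_n\|_{L^2(Q_1)})$, and $\|G_n\|\le C(k_n+S_n+R_n+\varepsilon_c\sqrt{Y_n})$. After the scaling factors are collected, the bound on $\operatorname{osc}\widehat m_n$ becomes $C(\sqrt{Y_n}+k_n+S_n+R_n)$, measured in units of $\mu_n$, which the normalization in \eqref{eq:Yn} makes dimensionless.

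\textbf{Main obstacle.} The delicate point is to make sure the time factor $R_n^2/\mu_n$ and the diffusion factor combine with no stray power of $\mu_n$. The mobility in front of $\partial_\tau$ is degenerating in the intrinsic scaling, so we want the constants to come only from the $\nabla\zeta$ and $G_n$ bounds. Those bounds are uniform in $\mu_n$, by Lemma~\ref{lem:meanfree}, precisely because they come from the mean-free test. The cross term gets the factor $k_n^2/\mu_n\le k_n$ as before, and $\nabla z_v$ gets the factor $S_n$.

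For the terminal jump, we write
\[
\widehat m_{n+1}(t_0)-\widehat m_n(t_0)=\bigl(\widehat m_{n+1}-m_{n+1}\bigr)+\bigl(m_{n+1}-m_n\bigr)+\bigl(m_n-\widehat m_n\bigr)
\]
and average in $t$ over $I_{n+1}$. Each difference between a weighted mean and an unweighted mean on the same or nested balls is bounded by Poincaré and Jensen by $\sqrt{Y_n}$ or $\sqrt{Y_{n+1}}$ (with constants depending on $\theta$). To replace the time average by the value at $t_0$, we add the oscillations of $\widehat m_n$ and $\widehat m_{n+1}$ obtained in the first part. Summability of the jumps then follows from $\sum\sqrt{Y_n}<\infty$ in Lemma~\ref{lem:meanfree} together with the geometric decay of $k_n$, $R_n$ and $S_n$.
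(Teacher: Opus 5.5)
Your proposal is correct and follows essentially the paper's argument: testing the tangential error equation with $\chi_n$ gives the exact flux balance for the weighted mean, in which the factor $\mu_n$ in front of $\partial_\tau$ cancels against $w=c_n+\mu_n\zeta$. Cauchy--Schwarz with the mean-free Caccioppoli and forcing bounds then gives the oscillation estimate, and the jump estimate comes, as in the paper, from comparing nested means on common time slices and transporting them to $t_0$.

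Two small points of precision. First, the resulting bound on $\operatorname{osc}\widehat m_n$ holds in physical units with no factor $\mu_n$, so your phrase ``measured in units of $\mu_n$'' is misleading. Second, to transport $\widehat m_{n+1}$ to $t_0$ you need its oscillation on the interval you average over. Either average over $I_{n+2}$, or bound $\operatorname{osc}_{I_{n+1}}\widehat m_{n+1}$ directly from the flux identity and the Caccioppoli estimate on $Q_n$.
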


\begin{proof}
In normalized variables the weighted mean of $\zeta$ satisfies the exact flux balance
\[
\mu_n\frac d{d\tau}\widehat\zeta_n
=-\frac{\int(C_n\nabla\zeta+G_n)\cdot\nabla\chi\,dy}{\int\chi\,dy}.
\]
The physical mean is $c_n+\mu_n\widehat\zeta_n$, so the singular factor cancels. Cauchy--Schwarz, the mean-free Caccioppoli estimate, and the forcing bound give the oscillation estimate. Comparing two nested weighted means on a common time slice and transporting both to $t_0$ yields the jump estimate; summability follows from Lemma~\ref{lem:meanfree}.
\end{proof}

\section{Terminal coupled Schur regularity}\label{sec:terminal}

The upper chain terminates either in an annular branch $\delta k\le r\le k$ or when $k\le\lambda$. Put $\mu=\lambda+k$, normalize
\[
r=kp,
\qquad
w=m(\tau)+\mu\zeta,
\qquad
x=x_0+Ry,
\qquad
t=t_0+R^2\mu^{-1}\tau.
\]
The principal system is
\begin{align}
p_\tau-\divv(A(p)\nabla p+B^\top\nabla\zeta)&=\divv F_p,
\label{eq:terminalp}\\
\mu\zeta_\tau-\divv(\varepsilon B\nabla p+C\nabla\zeta)&=\divv F_\zeta,
\label{eq:terminalz}
\end{align}
where
\[
A\ge c>0,
\qquad C\succeq cI,
\qquad
\varepsilon=\frac{k^2}{\mu}\le k.
\]

\begin{theorem}[Terminal coupled $\varepsilon$-regularity]\label{thm:terminal}
There are universal $k_c,\varepsilon_c>0$, $\vartheta\in(0,1/4)$ and $\gamma\in(0,1)$ such that, if a terminal cylinder lies inside the pre-exit tangential tube, $k\le k_c$, the mean-free tangential excess is at most $\varepsilon_c^2$, and the scaled strong-state errors are small, then
\begin{equation}
\mathcal P(\rho)+\mathcal Y(\rho)
\le C\rho^{2\gamma}\bigl(\mathcal P(1)+\mathcal Y(1)\bigr)
+C\omega_v(R)^2\rho^{2\gamma},
\label{eq:terminaldecay}
\end{equation}
where
\[
\mathcal P(\rho)=\fint_{Q_\rho}|p-(p)_{Q_\rho}|^2,
\qquad
\mathcal Y(\rho)=\fint_{Q_\rho}|\zeta-(\zeta)_{B_\rho}(\tau)|^2.
\]
The constants are uniform in both terminal branches and the corresponding weighted means of $w$ have a Dini-summable tail. Initial one-sided and $C^{1,1}$ no-flux boundary variants hold with geometric constants.
\end{theorem}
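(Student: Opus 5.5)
The plan is a compactness (blow-up) argument by contradiction. It is followed by iteration of a one-step decay, in the style of Campanato $\varepsilon$-regularity. The introduction already describes the singular limit, and we use that description.

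\textbf{One-step decay.} First fix $\vartheta$. Suppose there is a sequence of terminal cylinders with parameters $k_m\to0$, $\varepsilon_{c,m}\to0$ and vanishing scaled strong-state errors. Normalize so that $\mathcal P_m(1)+\mathcal Y_m(1)=1$, and suppose
\[
\mathcal P_m(\vartheta)+\mathcal Y_m(\vartheta)>\vartheta^{2\gamma'}\bigl(\mathcal P_m(1)+\mathcal Y_m(1)\bigr)
\]
for some fixed $\gamma'>\gamma$. We derive a contradiction.

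Uniform bounds come from two sources:
\begin{enumerate}[label=(\roman*)]
\item the Caccioppoli inequality for \eqref{eq:terminalp};
\item the mean-free Caccioppoli inequality for \eqref{eq:terminalz}, tested with a cutoff times $\zeta-(\zeta)_{B}(\tau)$ as in Lemma~\ref{lem:meanfree}, which removes $\mu\partial_\tau$ of the mean.
\end{enumerate}
These give $L^\infty_\tau L^2$ and $L^2_\tau H^1$ bounds on $p$. They also give an $L^2_\tau H^1_y$ bound on $\zeta-(\zeta)_B$, with constants independent of $\mu_m$. For $p$, Aubin--Lions gives strong $L^2$ compactness. For $\zeta$ we only have spatial compactness at each time. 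If $\mu_m\to0$, we additionally use the flux identity of Lemma~\ref{lem:meanflux} to control the weighted means. We split according to the limit of $\mu_m$.

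\emph{Singular branch, $\mu_m\to0$.} Here the limit mean-free profile $\zeta_\infty$ satisfies $\divv(C_\infty\nabla\zeta_\infty)=0$ at a.e.\ time, with $C_\infty$ constant (frozen coordinates). So $\zeta_\infty$ is spatially harmonic, and its spatial oscillation decays like $\vartheta^2$. Since $\varepsilon=k^2/\mu\le k\to0$, the feedback term drops out of \eqref{eq:terminalz}. The forcings $F_p,F_\zeta$ vanish in the limit. The limit of $p$ solves
\[
p_\tau-\divv(A\nabla p+B^\top\nabla\zeta_\infty)=0 .
\]
We rewrite $B^\top\nabla\zeta_\infty$ as a drift. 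Because $\zeta_\infty$ is harmonic, this term is divergence free. Testing with truncations $(p-\ell)_\pm\eta^2$, the drift contribution integrates to a pure cutoff term, which is the exact truncation cancellation mentioned in the introduction. De Giorgi--Nash--Moser then gives Hölder decay with exponent $\gamma_0$ depending only on the ellipticity constants. Choosing $\gamma'<\gamma_0$ and $\vartheta$ small contradicts the assumed failure once we pass to the limit by strong convergence.

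\emph{Non-singular branch, $\mu_m\to\mu_\infty>0$.} The feedback $\varepsilon B\nabla p$ still vanishes. The system becomes triangular: a uniformly parabolic system for $\zeta$ with constant coefficients, followed by a scalar parabolic equation for $p$ with a bounded, smooth drift. We get Campanato decay for $\zeta$ first and then scalar De Giorgi decay for $p$.

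\textbf{Iteration and Dini tail.} Iterating the one-step estimate at scales $\vartheta^j$ gives \eqref{eq:terminaldecay}. The smallness hypotheses are preserved along the iteration because the excess decays and $\omega_v(\vartheta^jR)\le\omega_v(R)$. The term $\omega_v(R)^2\rho^{2\gamma}$ collects the strong-state errors. The Dini-summable tail of the weighted means of $w$ follows as in Lemma~\ref{lem:meanflux}. Each mean jump is bounded by $C\bigl(\sqrt{\mathcal Y(\vartheta^j)}+\text{errors}\bigr)$, and these bounds decay geometrically. The initial and no-flux boundary variants use the same argument with half-cylinders and flattened boundary: the Neumann harmonic limit and the reflection-compatible truncations preserve both the cancellation and the Caccioppoli estimates.

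\textbf{Main obstacle.} I expect the singular compactness step to be hardest. We must pass to the limit in the product $B^\top\nabla\zeta$ when $\nabla\zeta_m$ converges only weakly. We must also verify that the cancellation survives for the limit, not just for the harmonic profile. My first attempt is to use the energy identity for \eqref{eq:terminalz} to upgrade to strong $L^2$ convergence of $\nabla\zeta_m$: compare $\int C\nabla\zeta_m\cdot\nabla\zeta_m$ with its limit, using that the $\mu_m\partial_\tau$ term contributes a nonnegative boundary term of size $O(\mu_m)$. If this fails, I would keep a weak-limit formulation: the drift is divergence free in the weak sense, and that suffices for De Giorgi with a drift in $L^2_\tau L^\infty_y$ on interior cylinders, by harmonic interior estimates.
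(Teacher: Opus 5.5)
Your architecture matches the paper's: a compactness argument split by $\lim\mu_m$, a spatially harmonic mean-free tangential limit, a divergence-free drift handled by the truncation cancellation, and a triangular non-singular branch. The step that fails is the iteration.

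\textbf{The iteration.} Your one-step inequality controls the combined excess $\mathcal P+\mathcal Y$, but the hypothesis makes only $\mathcal Y(1)$ small. In both terminal branches $p=r/k$ ranges over $[\delta,1]$ or $[0,1]$, so $\mathcal P(1)$ is of order one. Your $\varepsilon_c$ is produced by the contradiction after $\vartheta$ is fixed. Hence the bound $\mathcal Y(\vartheta)\le\vartheta^{2\gamma'}(\mathcal P(1)+\mathcal Y(1))$ is far above $\varepsilon_c^2$, and the threshold is not reproduced at scale $\vartheta$. The iteration therefore stops after one step; ``the excess decays'' does not help, because the decaying quantity was never small.

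The missing idea is to use the triangular structure quantitatively. The tangential equation sees $p$ only through $\varepsilon B\nabla p$ with $\varepsilon=k^2/\mu\le k$, and through coefficients depending on $r=kp$. So you must:
\begin{enumerate}
\item prove a separate tangential contraction, schematically $\mathcal Y(\vartheta)\le\vartheta^{2\gamma}\mathcal Y(1)+C(k^2+\omega_v(R)^2)$, the terminal analogue of Lemma~\ref{lem:meanfree};
\item choose $k_c$ and the strong-state smallness after $\varepsilon_c$, so that $\mathcal Y\le\varepsilon_c^2$ propagates to all scales;
\item only then run the normal decay, with a uniformly small drift at every scale.
\end{enumerate}

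Relatedly, the paper keeps $\varepsilon_c$ fixed in the singular branch. Its limit is therefore the nonlinear equation with drift $-\beta p\nabla h$, where $h=z_0(\tau)\cdot\zeta$ and $\|\nabla h\|_{L^2_\tau L^\infty_y}\le C\varepsilon_c$. The cancellation is what makes the scalar De Giorgi constants independent of that drift. In your formulation, with $\varepsilon_{c,m}\to0$, the drift vanishes from the unnormalized limit and becomes $\divv(B(p_*)^\top\nabla\tilde\zeta)=0$ after linearization. So the cancellation does no work there.

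\textbf{The singular-branch limit for $\zeta$.} You note that $\zeta_m$ is compact only in space, yet you conclude ``by strong convergence.'' That is available for $p$ alone. For the tangential part, lower semicontinuity gives $\mathcal Y_\infty(\vartheta)\le\liminf\mathcal Y_m(\vartheta)$, which is the wrong direction. So the contradiction does not close when the normalized tangential excess is comparable to the total excess.

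The obstacle you single out is harmless: once the coefficients converge boundedly a.e., $B_m^\top\nabla\zeta_m$ converges weakly. Your proposed energy upgrade is circular. The cutoff term $\iint C\nabla\zeta_m\cdot\nabla(\eta^2)(\zeta_m-\widehat\zeta_m)$ pairs two sequences that have no temporal compactness.

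What is needed is a direct, $\mu$-uniform bound on $\mathcal Y_m(\vartheta)$, built from slice-wise Poincar\'e and the mean-free Caccioppoli and difference-quotient estimates. That bound must also control how $\|\nabla\zeta(\tau)\|$ is distributed in time, for three reasons:
\begin{enumerate}
\item $\mathcal Y(\rho)$ averages over a time window of length $\rho^2$;
\item harmonicity acts only slice by slice;
\item the term $\mu\sup_\tau\int\eta^2|\zeta-\widehat\zeta|^2$ degenerates as $\mu\to0$.
\end{enumerate}
The paper's proof only asserts ``harmonic Campanato decay'' at this point. Still, your argument as written relies on a convergence it does not have.
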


\begin{proof}
We record the compactness mechanism because it is the point at which the singular temporal mode must be treated correctly. Consider first a sequence with $\mu_j\to0$. Projection onto spatial mean-zero tangential tests eliminates the time-dependent constant mode. The term $\mu_j\zeta_{j,\tau}$ vanishes distributionally and the reverse coupling $\varepsilon_jB_j\nabla p_j$ tends to zero. Hence, for a.e. normalized time,
\[
-\divv(C_0(\tau)\nabla\zeta)=0.
\]
Since $C_0$ acts in target state space and is invertible, $\zeta$ is componentwise harmonic in the spatial variable (with the corresponding homogeneous Neumann condition on a flattened physical boundary).

The normal sequence is compact in $L^2_{\rm loc}$ by Aubin--Lions. Its limit satisfies
\[
p_\tau-\divv(A_*(p)\nabla p-\beta p\nabla h)=0,
\qquad
h=z_0(\tau)\cdot\zeta,
\]
with $\beta=\sqrt{c_N}$. The harmonic estimate and the small terminal tangential excess give
\[
\|\nabla h\|_{L^2_\tau L^\infty_y}\le C\varepsilon_c.
\]
Moreover $\Delta h=0$, so $b=\beta\nabla h$ is divergence free and tangent to a physical no-flux boundary. If $v=(p-\ell)_+$, then
\[
\int b\cdot\nabla p\,\eta^2v
=\frac12\int b\cdot\nabla(v^2)\eta^2
=-\int\eta v^2b\cdot\nabla\eta.
\]
Thus the drift contributes only multiplicatively to the truncation energy and creates no additive critical forcing floor. The scalar De Giorgi recurrence retains its superlinear exponent, yielding uniform normal oscillation decay. Harmonic Campanato decay yields the tangential part of \eqref{eq:terminaldecay}.

If $\liminf\mu_j>0$, argue by the same triangular compactness rather than by a general regularity theorem for nonlinear systems. In the contradiction sequence $k_j\to0$, so $k_j^2/\mu_j\to0$. The tangential limit is uniformly parabolic; the vanishing terminal mean-free threshold implies $\nabla\zeta_j\to0$ locally. The normal limit is therefore a scalar uniformly parabolic equation and standard scalar De Giorgi--Campanato gives strict decay. This proves a single uniform terminal estimate. The projected singular passage, the drift cancellation, and the triangular non-singular branch remain valid on one-sided initial and flattened no-flux boundary cylinders.
\end{proof}

\section{Smooth simultaneous continuation and global gluing}\label{sec:global}

Combining Proposition~\ref{prop:affine}, Lemma~\ref{lem:DG}, Lemmas~\ref{lem:meanfree}--\ref{lem:meanflux}, and Theorem~\ref{thm:terminal} yields the local first-exit result.

\begin{theorem}[Local simultaneous continuation]\label{thm:localcontinue}
There exist $R_0>0$ and $\rho_*>0$, depending only on the fixed strong-state and geometric data, such that no smooth approximation with outer augmented energy/dissipation below $\rho_*$ can have a first exit from
\[
u_i<r_A,
\qquad
|\pi_i(u)-\pi_i(v)|<\delta_\tau
\]
inside a face collar. The conclusion holds at interior, initial, and spatial-boundary points.
\end{theorem}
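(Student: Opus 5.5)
We argue by contradiction at a putative first exit time. Let $t_*$ be the first time at which, at some point $x_*$ of a face collar, either $u_i(t_*,x_*)=r_A$ or $|\pi_i(u)-\pi_i(v)|(t_*,x_*)=\delta_\tau$, while both strict inequalities hold on $[0,t_*)\times\overline\Omega$ near $x_*$. On $[0,t_*]$ the hypotheses of Proposition~\ref{prop:affine} therefore hold, because $\delta_\tau\le\delta_A/4$ and $u_i,v_i\le r_A$ in the collar. The localized estimate \eqref{eq:affine}, integrated in time and combined with the assumed outer bound, gives smallness of $\sup_t\int\mathcal H_{\lambda,i}$ and of the anisotropic dissipation, both of order $\rho_*$. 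From this we fix $R_0$ so that on every intrinsic cylinder $Q_{R}^k$ with $R\le R_0$ and $k\ge\lambda$ the quantities $\mathfrak B_{R,k}$ and $\mathfrak R_{R,k}$ are at most $\varepsilon_0$. For $\mathfrak B$ this comes from $R^2\|\nabla z_v\|_\infty^2/(k(\lambda+k))$ with $R$ small. For $\mathfrak R$ it comes from the uniform tangential dissipation bound $\int|E_\tau|^2\lesssim\rho_*$, once $\rho_*$ is small relative to the smallest $k$ that occurs. The point $(t_*,x_*)$ may be interior, initial, or boundary, and we use the corresponding variant of each lemma; the one-sided versions are stated in Lemma~\ref{lem:DG} and Theorem~\ref{thm:terminal}.

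\emph{Normal exit.} Suppose $u_i(t_*,x_*)=r_A$. We split according to $v_i(t_*,x_*)$ using Lemma~\ref{lem:handoff}. If $v_i\le2s_b$, part (i) excludes even reaching $k_b$. If $2s_b\le v_i\le r_A/2$, part (ii) gives $u_i\ge\delta r_A$ on a child cylinder. There $R=(u_i+\lambda)/(v_i+\lambda)$ is bounded below away from $1$ on a set of fixed intrinsic measure, so the perspective density in \eqref{eq:Eaug} is bounded below and contradicts smallness of $\rho_*$. If $v_i>r_A/2$, we are outside the face collar's singular core and part (iii) hands off to the interior patch. The upper branch at level $r_A$ is excluded directly, since the center value equals $r_A>\kappa r_A$.

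\emph{Tangential exit.} Suppose $|\pi_i(u)-\pi_i(v)|(t_*,x_*)=\delta_\tau$. We run the upper normal chain $k_n=\kappa^nk_0$, $R_n=\theta^nR_0$ from Lemma~\ref{lem:DG} at $(t_*,x_*)$. Lower branches are handled by the normal-exit argument above. Along upper steps, Lemma~\ref{lem:meanfree} gives $\sum\sqrt{Y_n}\lesssim\sqrt{Y_0}+\text{geometric tail}$, and Lemma~\ref{lem:meanflux} gives summable jumps of the weighted means $\widehat m_n(t_*)$. The chain terminates in an annular branch or when $k\le\lambda$. At that point Theorem~\ref{thm:terminal} gives Campanato decay with Dini tail, so the weighted means of $w=z-z_v$ converge to $w(t_*,x_*)$. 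We then obtain
\[
|w(t_*,x_*)|\le|\widehat m_0(t_*)|+\sum_n|\widehat m_{n+1}-\widehat m_n|+\text{terminal tail}\lesssim\sqrt{\rho_*}+R_0^{\gamma'}+k_0,
\]
where $Y_0$ and $|\widehat m_0|$ are controlled by the augmented energy through Lemma~\ref{lem:statecoerc}. Since $w$ is comparable to the Schur tangential coordinate of $\pi_i(u)-\pi_i(v)$, choosing $R_0,k_0,\rho_*$ small makes the right-hand side less than $\delta_\tau$, which is a contradiction.

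The main obstacle is the non-circularity of the tangential step. Every estimate used must rely only on the pre-exit tube on $[0,t_*)$, and the smallness thresholds must be chosen in the order: atlas constants, then $R_0$, then $\rho_*$. Uniformity in $\lambda$ along the whole chain, including the switch to the terminal regime $k\le\lambda$ where the temporal mode is singular, must be verified carefully. I would first check that the constant $C$ in \eqref{eq:Ycontract} and the thresholds $\varepsilon_c,\varepsilon_0$ do not depend on $\rho_*$, so that the final smallness choice is consistent.
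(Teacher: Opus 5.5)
Your architecture matches the paper's: first exit, handoff for normal exits, the upper chain with Lemmas~\ref{lem:meanfree}--\ref{lem:meanflux}, closure by Theorem~\ref{thm:terminal}, and summation of mean jumps. However, the step that supplies the smallness hypothesis of Lemma~\ref{lem:DG} along the chain does not work. From the global dissipation you only get
\[
\mathfrak R_{R_n,k_n}\le\frac{1}{k_nR_n^d}\int_{Q_T}|E_\tau|^2\lesssim\frac{\rho_*}{k_nR_n^d}.
\]
The upper chain runs until $k_n\approx\lambda$, i.e.\ for about $\log(k_0/\lambda)/\log(1/\kappa)$ steps, with $R_n=\theta^nR_0$ and $\theta<\kappa$. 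So ``$\rho_*$ small relative to the smallest $k$'' really means $\rho_*\lesssim\lambda^{1+d\log\theta/\log\kappa}$, a power of $\lambda$ strictly larger than $2$. This contradicts the requirement that $\rho_*$ depend only on fixed data. It also can never be met in the bootstrap, where the outer energy is only known to be $O(\Flam_\lambda)$ and $\Flam_\lambda\gtrsim\lambda^2$. A global integral bound on the tangential error does not give scale-invariant smallness at small intrinsic scales; this is the ``critical Morrey'' obstruction the paper names.

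The missing idea is to propagate smallness inductively through the mean-free excess. Rescaling the mean-free Caccioppoli inequality to the child cylinder gives
\[
\mathfrak R_{R_{n+1},k_{n+1}}\le C(\theta,\kappa)\,\frac{\mu_n}{k_n}\bigl(Y_n+k_n^2+S_n^2+R_n^2\bigr),
\qquad
\frac{\mu_n}{k_n}=1+\frac{\lambda}{k_n}\le2\quad\text{while }k_n\ge\lambda.
\]
Then \eqref{eq:Ycontract} keeps $Y_n$ below a fixed threshold once $Y_0$ is. Thus $\rho_*$ enters only at $n=0$, through $Y_0$, $\mathfrak R_{R_0,k_0}$ and $\widehat m_0$, and there your ordering (atlas, then $R_0$, then $\rho_*$) is correct. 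Likewise, $\mathfrak B_{R,k}\le\varepsilon_0$ is false uniformly over $R\le R_0$, $k\ge\lambda$ (take $R=R_0$, $k=\lambda$). Along the chain it holds only because $\mathfrak B_{R_n,k_n}\le(\theta/\kappa)^{2n}R_0^2\|\nabla z_v\|_\infty^2/k_0^2$, which needs $R_0\ll k_0$.

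Your normal-exit case analysis also has a hole. Part (ii) of Lemma~\ref{lem:handoff} is an interior-entry bound, not a contradiction. It only gives $u_i\ge\delta r_A$ on a child, with $\delta$ the small universal De Giorgi constant, and $\delta r_A$ can lie below $v_i$, which may be as large as $r_A/2$. So the ratio $(u_i+\lambda)/(v_i+\lambda)$ need not stay away from $1$, and the perspective density has no lower bound. Part (iii) concerns $u_i$ staying below $r_A$; it says nothing about $u_i$ reaching $r_A$ when $r_A/2<v_i<3r_A/4$, which is still inside the collar. In both cases you only have a pointwise gap $u_i-v_i\ge r_A/4$ at $(t_*,x_*)$. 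To contradict small energy you must spread it over a cylinder of fixed size, i.e.\ you need a $\lambda$-uniform modulus of continuity for $u_i$. The paper invokes Lemma~\ref{lem:handoff} only in the core $v_i\le2s_b$. It closes the remaining normal exits through the annular branch of Theorem~\ref{thm:terminal}, whose normal decay $\mathcal P(\rho)\lesssim\rho^{2\gamma}$ keeps $u_i$ near $r_A$ on a fixed intrinsic cylinder. There $\frac14|u-v|^2$ from Lemma~\ref{lem:statecoerc} is bounded below, contradicting smallness of $\rho_*$. For the same reason, lower branches inside your tangential chain are not ``handled by the normal-exit argument''. They are terminal annular branches closed by Theorem~\ref{thm:terminal}.
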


\begin{proof}
Normal exits in the strong face core are excluded by Lemma~\ref{lem:handoff}. While the upper normal branch persists, Lemmas~\ref{lem:meanfree} and \ref{lem:meanflux} give a Dini-summable tangential history. The first annular branch or first scale $k\le\lambda$ is completed by Theorem~\ref{thm:terminal}. Therefore neither a fixed normal nor a fixed tangential first exit is possible.
\end{proof}

Define face collars $U_i=\{v_i<3r_A/4\}$ and an interior region $U_0=\{\min_i v_i>s_b\}$. These sets cover the compact strong trajectory, with at most one face collar singular at a point. On $U_i$ use the Schur chart and the $i$-th perspective corrector; in the overlap Lemma~\ref{lem:handoff} creates a fixed positive lower bound for the regularized normal component before the corrector is switched off.

\begin{theorem}[Smooth simultaneous continuation]\label{thm:smoothcontinue}
For sufficiently small $\lambda$, every smooth admissible approximation with common initial data satisfies
\[
u^{\lambda,n}(t,x)\in\Csafe_{\eta/4}
\qquad\forall(t,x)\in[0,T]\times\overline\Omega,
\]
uniformly in the approximation index.
\end{theorem}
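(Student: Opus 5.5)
The argument is a continuity (bootstrap) argument in time for a fixed smooth admissible approximation $u=u^{\lambda,n}$, run simultaneously on the whole atlas $\{U_0,U_1,\dots,U_N\}$. Let $T^*$ be the supremum of times $t\le T$ such that on $[0,t]\times\overline\Omega$ three things hold. First, $u$ stays in the enlarged tube $\Csafe_{\eta/8}$. Second, on each collar $U_i$ one has $u_i<r_A$ and $|\pi_i(u)-\pi_i(v)|<\delta_\tau$. Third, on $U_0$ one has $|u-v|<\delta_0$ for a fixed small $\delta_0$. At $t=0$ all of this holds strictly by \ref{Hinitial} and \ref{Hsafe}, and the set is relatively open by smoothness of $u$ and $v$. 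The goal is to show that every constraint holds with strict margin on $[0,T^*]$. This forces $T^*=T$, and the margins then yield $u\in\Csafe_{\eta/4}$.

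\textbf{Step 1: energy bound before exit.} On $[0,T^*]$ every point lies in a chart in which Proposition~\ref{prop:affine} applies on $U_i$. On $U_0$ the mobility is uniformly positive, and the geodesic density $\mathcal G_\lambda$ gives the standard interior relative-entropy inequality. I would sum these local inequalities using the cutoffs $\chi_i(v_i)$ in \eqref{eq:Eaug}. The localization terms are the derivatives of $\chi_i(v_i)$, which fall only on $v$ and are bounded by $C\,\Eaug_\lambda$, together with cross terms in the dissipation. The cross terms live on $\{r_A/2\le v_i\le 3r_A/4\}$, and there Lemma~\ref{lem:handoff}(ii)--(iii) gives $u_i\ge c_Ir_A$. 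The weight $\omega_i$ is therefore bounded below, the dissipation is uniformly coercive, and the cross terms are absorbed. Gronwall then gives
\[
\sup_{t\le T^*}\Eaug_\lambda+c\int_0^{T^*}\Daug_\lambda\le C\Flam_\lambda[v;T].
\]
The right-hand side is $o(1)$ as $\lambda\to0$ by dominated convergence, since the integrand is at most $|\nabla v_i|^2/(v_i+\lambda)^2\cdot\lambda^2\le|\nabla v_i|^2$ and tends to zero pointwise off $\{v_i=0\}$, where $\nabla v_i=0$ a.e. Choose $\lambda_0$ so that $C\Flam_\lambda<\rho_*$. The constant $C$ does not depend on $n$, since only the common initial data and the structure of smooth solutions enter.

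\textbf{Step 2: no exit.} Suppose some constraint became an equality at $T^*<T$. Outer augmented energy and dissipation below $\rho_*$ is exactly the hypothesis of Theorem~\ref{thm:localcontinue}, so a first exit through $u_i=r_A$ or through the tangential tube inside a face collar is impossible. This holds at interior, initial and boundary points alike. An exit through $|u-v|=\delta_0$ on $U_0$ is handled by the same first-exit scheme in the uniformly parabolic case, with $k\ge s_b$ and $\lambda$ playing no role; there, De Giorgi/Campanato smallness from small energy gives pointwise closeness. The remaining case is a point of $U_i\setminus\{v_i\le r_A/2\}$ lying in the overlap with $U_0$. This is covered by Lemma~\ref{lem:handoff}, which supplies the lower bound $u_i\ge c_Ir_A$ before the corrector switches off, so the interior constraint takes over.

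\textbf{Step 3: conclusion.} Strict constraints at $T^*$ contradict maximality, so $T^*=T$. Inside a collar, $u_j$ for $j\ne i$ equals $(1-u_i)\pi_i(u)_j\ge(1-r_A)(\pi_i(v)_j-\delta_\tau)$. Here $\pi_i(v)_j\ge v_j\ge\eta$ whenever $j$ is not the smallest component of $v$, and $\delta_\tau,r_A\ll\eta$ by \eqref{eq:hierarchy}. Hence $u_{[2]}\ge\eta/4$. On $U_0$ the same bound follows from $|u-v|<\delta_0\le\eta/2$. I expect the main obstacle to be the gluing in Step 1, namely showing that the localization errors of the charts never require information at a point before the first-exit argument has certified the chart there. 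The synchronized hierarchy \eqref{eq:hierarchy} and the handoff lemma are designed for exactly this, and I would verify it carefully on the overlap annulus.
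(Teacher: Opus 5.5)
Your proposal follows the paper's proof: a first-exit argument over the finite atlas in which face exits are excluded by Theorem~\ref{thm:localcontinue}, collar-to-interior transitions by Lemma~\ref{lem:handoff}, and interior exits by the non-singular (uniformly parabolic) mechanism, concluding with the same bound $u_j=(1-u_i)\pi_i(u)_j\ge(1-r_A)(\eta-\delta_\tau)$ for $j\ne i$. Your one addition is to make explicit the energy bootstrap (gluing, Gronwall, and $\Flam_\lambda\to0$ so that the energy stays below $\rho_*$ up to the first exit); the paper leaves this implicit and records it only after the theorem in \eqref{eq:globalsmooth}--\eqref{eq:gronwall}.
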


\begin{proof}
Run a first-exit argument for the finite atlas. Face exits are excluded by Theorem~\ref{thm:localcontinue}; transitions to $U_0$ are covered by Lemma~\ref{lem:handoff}; interior exits are excluded by the non-singular triangular compactness mechanism used in Theorem~\ref{thm:terminal}. In the face core $v_j\ge\eta$ for $j\ne i$, whereas $u_i\le r_A$ and the tangential composition error is smaller than $\delta_\tau$. Hence
\[
u_j=(1-u_i)\pi_i(u)_j
\ge(1-r_A)(\eta-\delta_\tau)\ge\eta/2
\qquad(j\ne i)
\]
for the synchronized thresholds. Thus at most one component of $u$ can be below $\eta/2$. The interior compact set gives the same conclusion after a fixed reduction of thresholds.
\end{proof}

The cutoff derivatives are supported only where $r_A/2\le v_i\le3r_A/4$. By Lemma~\ref{lem:handoff}, before this support is reached the regularized state has acquired a fixed lower normal bound. Consequently the perspective density and its first endpoint derivatives are uniformly smooth there, and all cutoff terms are controlled by
\[
\frac c8\Daug_\lambda+C\Eaug_\lambda.
\]
Summing the face estimates and the ordinary interior relative-energy estimate gives the global smooth inequality
\begin{equation}
\Eaug_\lambda(\tau)
+c\int_0^\tau\Daug_\lambda(t)\,dt
\le C\int_0^\tau\Eaug_\lambda(t)\,dt
+C\Flam_\lambda[v;\tau].
\label{eq:globalsmooth}
\end{equation}
Since the initial data agree, $\Eaug_\lambda(0)=0$, and Gronwall yields
\begin{equation}
\esssup_{0\le t\le T}\Eaug_\lambda(t)
+c\int_0^T\Daug_\lambda(t)\,dt
\le C_T\Flam_\lambda[v;T].
\label{eq:gronwall}
\end{equation}

\section{Weak approximation and lower semicontinuity}\label{sec:weak}

We now pass from smooth approximations to admissibly approximable weak solutions.

\begin{proposition}[Weak passage]\label{prop:weak}
For every sufficiently small fixed $\lambda$, an admissibly approximable entropy weak solution satisfies \eqref{eq:safe-main}, and \eqref{eq:globalsmooth} holds with $u^{\lambda,n}$ replaced by $u^\lambda$ for a.e. terminal time.
\end{proposition}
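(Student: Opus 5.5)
Fix $\lambda\le\lambda_0$ and take the smooth simplex-valued no-flux approximations $u^{\lambda,n}\to u^\lambda$ given by admissible approximability. The approximations converge strongly in $L^2(Q_T)$ and a.e., their gradients converge weakly in $L^2(Q_T)$, they share the initial trace $v(0)$, and their entropy bounds are uniform in $n$.

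\emph{Step 1: safe region.} Theorem~\ref{thm:smoothcontinue} gives $u^{\lambda,n}\in\Csafe_{\eta/4}$ on all of $[0,T]\times\overline\Omega$, uniformly in $n$. The set $\Csafe_{\eta/4}$ is closed in $\SigmaN$, since $z\mapsto z_{[2]}$ is continuous. Passing to the a.e. limit therefore gives \eqref{eq:safe-main} directly.

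\emph{Step 2: the inequality for each $n$.} Apply \eqref{eq:globalsmooth} to $u^{\lambda,n}$ for every $\tau\in(0,T]$. Its right-hand side involves $\Flam_\lambda[v;\tau]$, which does not depend on $n$, and $\int_0^\tau\Eaug_\lambda(u^{\lambda,n}\mid v)\,dt$.

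\emph{Step 3: limit in the energy terms.} On $\Csafe_{\eta/4}$ the integrand of $\Eaug_\lambda$ is continuous and bounded. Indeed, $\mathcal G_\lambda$ is continuous on the safe class because the minimizing geodesic is unique and stays away from the skeleton (Section~\ref{sec:geometry}). The perspective term is continuous because $u_i+\lambda,\,v_i+\lambda\ge\lambda>0$. Dominated convergence then gives
\[
\int_0^\tau\Eaug_\lambda(u^{\lambda,n})\,dt\to\int_0^\tau\Eaug_\lambda(u^\lambda)\,dt .
\]
Since $u^{\lambda,n}(t)\to u^\lambda(t)$ in $L^2(\Omega)$ for a.e. $t$ along a subsequence (Fubini), we also get $\Eaug_\lambda(u^{\lambda,n}(\tau))\to\Eaug_\lambda(u^\lambda(\tau))$ for a.e. terminal time $\tau$. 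The same holds for the inequality integrated in $\tau$, should pointwise-in-time convergence be unavailable.

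\emph{Step 4: lower semicontinuity of the dissipation (main obstacle).} On each chart $\Daug_\lambda$ is a sum of terms $\int\Lambda(u,v)\,|L(u,v)\nabla(u-v)|^2$. Here $L$ is the Schur projection onto $E_\tau$ or $E_n$, and $\Lambda$ is either $1$ or the weight $\omega_i$. All coefficients are continuous in $u$ on the safe region, and $\omega_i$ is bounded and continuous since $\lambda>0$. Take a term $\int_0^\tau\!\int\Lambda_n|L_n\nabla(u^{\lambda,n}-v)|^2$, with cutoff weights $\chi_i(v_i)$ included. The coefficients $\sqrt{\Lambda_n}L_n$ converge a.e. and are uniformly bounded, and $\nabla(u^{\lambda,n}-v)\rightharpoonup\nabla(u^\lambda-v)$ weakly in $L^2$. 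Hence $\sqrt{\Lambda_n}L_n\nabla(u^{\lambda,n}-v)\rightharpoonup\sqrt{\Lambda}L\nabla(u^\lambda-v)$ weakly in $L^2$: test against $\phi\in L^2$ and use dominated convergence on $\sqrt{\Lambda_n}L_n^\top\phi$. Weak lower semicontinuity of the $L^2$ norm then gives $\int_0^\tau\Daug_\lambda(u^\lambda)\le\liminf_n\int_0^\tau\Daug_\lambda(u^{\lambda,n})$. The delicate point is to make sure $\Daug_\lambda$ contains no term that is not of this convex-quadratic form in the gradient with continuous coefficients, such as cross terms produced by the cutoff. Those were absorbed at the smooth level into $\frac c8\Daug_\lambda+C\Eaug_\lambda$ before \eqref{eq:globalsmooth} was written, so the inequality we pass to the limit in contains only the positive quadratic dissipation.

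\emph{Step 5: conclusion.} Taking $\liminf_n$ in \eqref{eq:globalsmooth} yields the same inequality for $u^\lambda$ at a.e. $\tau$. Gronwall then gives \eqref{eq:mainenergy}, and Lemma~\ref{lem:statecoerc} gives \eqref{eq:l2main}. The initial condition $\Eaug_\lambda(0)=0$ is preserved by the common initial trace.
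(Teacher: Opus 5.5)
Your proposal is correct and follows the paper's proof essentially step for step: closedness of $\Csafe_{\eta/4}$ under a.e.\ limits, convergence of the bulk energies from continuity and boundedness of the densities for fixed $\lambda$, and lower semicontinuity of the dissipation written as a finite sum of nonnegative quadratic forms with a.e.-convergent continuous coefficients. Your weak-times-strong argument is exactly the Ioffe step the paper invokes. The differences are cosmetic: you treat temporal slices by a Fubini subsequence rather than by weak lower semicontinuity, and continuity of $\mathcal G_\lambda$ for fixed $\lambda$ follows more directly from $\lambda I_{\T}\preceq M+\lambda I_{\T}\preceq 2I_{\T}$ and the triangle inequality than from geodesic uniqueness.
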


\begin{proof}
The safe-class inclusion from Theorem~\ref{thm:smoothcontinue} is uniform in the approximation index. Since $\Csafe_{\eta/4}$ is closed and $u^{\lambda,n}\to u^\lambda$ a.e., the inclusion passes to the limit.

For fixed $\lambda>0$, the geodesic density is continuous on the compact simplex product. The perspective densities in \eqref{eq:Eaug} are continuous and convex in the regularized endpoint variable and are bounded on the compact state range. Strong $L^2$ convergence plus a.e. convergence therefore gives convergence of the bulk energies; temporal slices are treated by weak lower semicontinuity.

Choose the atlas dissipation explicitly as a finite sum of nonnegative quadratic forms in $\nabla u-\nabla v$ whose coefficients are continuous on the already established safe atlas. The integrand is convex in the gradient variable. State coefficients converge a.e., while the gradients converge weakly in $L^2$. Ioffe lower semicontinuity yields
\[
\int_0^\tau\Daug_\lambda[u^\lambda,v]
\le\liminf_{n\to\infty}
\int_0^\tau\Daug_\lambda[u^{\lambda,n},v].
\]
The forcing is independent of the approximation. Passing to the limit in \eqref{eq:globalsmooth} proves the assertion.
\end{proof}

Theorem~\ref{thm:main} now follows from Proposition~\ref{prop:weak}, \eqref{eq:gronwall}, and Lemma~\ref{lem:statecoerc}.

\section{Interface forcing and quantitative rates}\label{sec:rates}

\begin{proof}[Proof of Theorem~\ref{thm:rate}]
Split each component integral in \eqref{eq:forcing} into $\{v_i\le\lambda\}$ and the dyadic shells
\[
\{2^j\lambda<v_i\le2^{j+1}\lambda\}.
\]
On the first set,
\[
\lambda^2\frac{|\nabla v_i|^2}{(v_i+\lambda)^2}
\le C\lambda^{2\alpha},
\]
while the sublevel-set bound gives measure $O(\lambda^\sigma)$. On shell $j$, the integrand is bounded by
\[
C\lambda^2(2^j\lambda)^{2\alpha-2}
\]
and the shell measure by $C(2^{j+1}\lambda)^\sigma$. Summing the geometric series gives the three regimes in Theorem~\ref{thm:rate}; the region $v_i\ge r_c$ contributes $O(\lambda^2)$.

For $v_i\asymp d^p$ near a smooth hypersurface,
\[
\alpha=\frac{p-1}{p},
\qquad
\sigma=\frac1p,
\qquad
2\alpha+\sigma=2-\frac1p.
\]
Therefore $\rho_\lambda=O(\lambda^{2-1/p})$. Combining this with \eqref{eq:l2main} gives the stated $L^2$ rate.
\end{proof}

\section{Discussion and scope of the result}

The theorem identifies the natural quantitative loss caused by a moving degeneracy. On a fixed face the regularization error may have the order $\lambda^2$, but a vanishing component creates an interface layer whose cost is measured by \eqref{eq:forcing}; its rate depends on the order of contact and the geometry of the zero set.

The proof also clarifies which stronger estimates should not be expected. The normal gradient error is controlled only with the weight dictated by the ratio of regularized normal scales. This is not a technical defect: an unweighted normal coercivity constant vanishes in the boundary layer. Similarly, critical Morrey control of the Schur cross flux is insufficient for pointwise continuation unless it is coupled to the intrinsic normal-amplitude decay. The mean-free/temporal-flux decomposition and the terminal triangular compactness argument are exactly what convert this structure into a summable scale estimate.

The codimension-one hypothesis is essential to the present argument. Higher-codimension corners allow several kernel directions to emerge simultaneously and the resolvent geometry develops additional curvature singularities. Extending the method to such multiple-stratum transitions would require a genuinely different multi-normal analysis and is not claimed here.

\end{document}